\numberwithin{equation}{section}
\theoremstyle{plain}
\newtheorem{thm}{Theorem}[section]
\newtheorem{cor}[thm]{Corollary}
\newtheorem{lem}[thm]{Lemma}
\theoremstyle{definition}
\theoremstyle{remark}
\begin{document}
	
\title[\(K\)-inner functions and \(K\)-contractions]{\(K\)-inner functions and \(K\)-contractions}

\author[J\"org Eschmeier]{J\"org Eschmeier}

\address{%
	Fachrichtung Mathematik\\
	Universit\"at des Saarlandes\\
	Postfach 15 11 50\\
	D-66041\\
	Saarbr\"ucken
	Germany}

\email{eschmei@math.uni-sb.de}

\author[Sebastian Toth]{Sebastian Toth}

\address{
	Fachrichtung Mathematik\\
	Universit\"at des Saarlandes\\
	Postfach 15 11 50\\
	D-66041\\
	Saarbr\"ucken
	Germany}
\email{toth@math.uni-sb.de}

\begin{abstract}
For a large class of unitarily invariant reproducing kernel functions $K$ on the 
unit ball $\mathbb B_d$ in $\mathbb C^d$, we characterize the $K$-inner functions on
$\mathbb B_d$ as functions admitting a suitable transfer function realization. We
associate with each $K$-contraction $T \in L(H)^d$ a canonical operator-valued
$K$-inner function and extend a uniqueness theorem of Arveson for minimal
$K$-dilations to our setting. We thus generalize results of Olofsson for $m$-hypercontractions on
the unit disc and of the first named author for $m$-hypercontractions on
the unit ball.\\
	
\vspace{0.5cm}
	
\noindent \emph{2010 Mathematics Subject Classification:} Primary 47A13; Secondary 47A20,47A45, 47A48\\
\emph{Key words and phrases:} \(K\)-inner functions, \(K\)-contractions, wandering subspaces
\end{abstract}

\maketitle

\section{Introduction}

Let $\mathbb B_d \subset \mathbb C^d$ be the open Euclidean unit ball and let
\begin{displaymath}
	k\colon\mathbb{D}\to\mathbb{C},\thickspace k(z)=\sum_{n=0}^{\infty}a_{n}z^{n}
\end{displaymath}
be an analytic function without zeros on the unit disc $\mathbb D$ in $\mathbb C$ such that 
\(a_{0}=1, a_{n}>0\) for all $n \in \mathbb N$ and such that
\begin{displaymath}
	0 < \inf_{n\in\mathbb{N}}\frac{a_{n}}{a_{n+1}} \leq \sup_{n\in\mathbb{N}}\frac{a_{n}}{a_{n+1}} < \infty.
\end{displaymath}
Since $k$ has no zeros, the reciprocal function $1/k \in \mathscr O(\mathbb D)$ admits a Taylor expansion
\[
(1/k)(z) = \sum_{n=0}^{\infty} c_n z^n \quad \; (z \in \mathbb D).
\]
The reproducing kernel
\begin{displaymath}
	K\colon\mathbb{B}_{d}\times\mathbb{B}_{d}\to\mathbb{C},K(z,w)=k(\langle z,w\rangle)
\end{displaymath}
defines an analytic functional Hilbert space \(H_{K}\) such that the row operator 
\(M_{z}\colon H_{K}^d\to H_{K}\) is bounded and has closed range  (\cite[Theorem A.1]{Arv2007}). Typical examples of 
functional Hilbert spaces of this type on the unit ball $\mathbb{B}_d$
are the Drury-Arveson space, the Dirichlet space, the Hardy space and the weighted Bergman spaces. 

Let $T = (T_1, \ldots ,T_d) \in L(H)^d$ be a commuting tuple of bounded linear operators on a complex Hilbert space $H$
and let $\sigma_T: L(H) \rightarrow L(H)$ be the map defined by $\sigma_T(X) = \sum_{i=1}^d T_i X T_i^*$. 
The tuple $T$ is called a $K$-contraction
if the limit
\[
\frac{1}{K}(T) = {\rm SOT-}\sum_{n=0}^{\infty}c_n\sigma_T^n(1_H) = 
 {\rm SOT-}\sum_{\alpha \in \mathbb N^d}c_{|\alpha |}\gamma_{\alpha}T^{\alpha}T^{*\alpha}
\]
exists and defines a positive operator. Here $\gamma_{\alpha} = |\alpha|!/\alpha !$ for $\alpha \in \mathbb N^d$. 

If $K(z,w) = 1/(1 -\langle z,w \rangle)$ is the Drury-Arveson kernel, then under a natural pureness condition the 
$K$-contractions coincide with the commuting row contractions of class $C_{\cdot 0}$. If $m$ is a
positive integer and $K_m(z,w) = 1/(1 -\langle z,w \rangle)^m$, then the pure $K_m$-contractions are
precisely the row-$m$-hypercontractions of class $C_{\cdot 0}$ (\cite[Theorem 3.49]{Sch18} and \cite[Lemma 2]{MV}).

An operator-valued analytic function $W: \mathbb B_d \rightarrow L(\mathscr E_*,\mathscr E)$ with Hilbert spaces
$\mathscr E$ and $\mathscr E_*$ is called 
$K$-inner if the map $\mathscr E_* \rightarrow H_K(\mathscr E)$, $x \mapsto Wx$, is a well-defined
isometry and
\[
(W\mathscr E_*) \perp M_z^{\alpha}(W\mathscr E_*) \quad \; \mbox{ for all } \alpha \in \mathbb N^d \setminus \{ 0 \}.
\]
Here $H_K(\mathscr E)$ is the $\mathscr E$-valued functional Hilbert space on $\mathbb B_d$ with reproducing kernel
$K_{\mathscr E}: \mathbb{B}_d \times \mathbb{B}_d \to L(\mathscr E), (z,w) \mapsto K(z,w)1_{\mathscr E}$. 

It was shown by Olofsson \cite{Olo06} that, for $d = 1$ and the Bergman-type kernel
\[
K_m: \mathbb D \times \mathbb D \to \mathbb C, \; K_m(z,w) = \frac{1}{(1 - z \overline{w})^m} \quad (m \in \mathbb N \setminus \{0\}),
\]
the $K_m$-inner functions $W: \mathbb D \rightarrow L(\mathscr E_*,\mathscr E)$ are precisely the
functions of the form
\[
W(z) = D + C \sum^m_{k=1} (1 - zT^*)^{-k}B,
\]
where $T \in L(H)$ is a pure $m$-hypercontraction on some Hilbert space $H$ and $B \in L(\mathscr E_*,H)$,
$C \in L(H,\mathscr E)$ and $D \in L(\mathscr E_*,\mathscr E)$ are bounded operators satisfying the 
operator equations
\begin{align*}
	&C^{*}C = (1/K_m)(T),\\
	&D^{*}C+B^{*}\Delta_{T}T^{*} = 0,\\
	&D^{*}D+B^{*}\Delta_{T}B = 1_{\mathscr E_*},
\end{align*}
where $(1/K_m)(T)$ is the $m$-th order defect operator of $T$ and
\[
\Delta_T = \sum_{k=0}^{m-1} (-1)^k \binom{m}{k+1} T^k T^{*k}.
\]

In \cite{Esc18} the result of Olofsson was extended to the unit ball by showing that a corresponding
characterization holds for functions $W: \mathbb B_d \to L(\mathscr E_*,\mathscr E)$ that are 
$K_m$-inner with respect
to the generalized Bergman kernels
\[
K_m: \mathbb B_d \times \mathbb B_d \to \mathbb C, \; K_m(z,w) = 1/(1 - \langle z,w \rangle)^m. 
\]                                                                                 

In the present note we show that the same result holds true for a large class of kernels
\[
K: \mathbb B_d \times \mathbb B_d \to \mathbb C, \; K(z,w) = \sum_{n=0}^{\infty} a_n \langle z,w \rangle^n
\]
including all complete Nevanlinna-Pick kernels such as the Drury-Arveson and the Dirichlet kernel and
all powers $K_{\nu}(z,w) = 1/(1 -\langle z,w \rangle)^{\nu}$ of the Drury-Arveson kernel with positive
real exponents. To prove that each $K$-inner function admits a transfer function realization as 
described above we extend a uniqueness result for minimal $K$-dilations due to Arveson to our
class of kernels.

\section{Wandering subspaces}\label{sectionwanderingsubspaces}

Let $T = (T_1, \ldots ,T_d) \in L(H)^d$ be a $K$-contraction, that is, a commuting tuple of bounded linear 
operators on a complex Hilbert space $H$ such that the limit
\[
\frac{1}{K}(T) = {\rm SOT-}\sum_{n=0}^{\infty}c_n\sigma_T^n(1_H) = 
 {\rm SOT-}\sum_{\alpha \in \mathbb N^d}c_{|\alpha |}\gamma_{\alpha}T^{\alpha}T^{*\alpha}
\]
exists and defines a positive operator.  A $K$-contraction $T \in L(H)^d$ is said to be pure if
\[
{\rm SOT-}\lim_{N\to\infty} 1_H - \sum_{n=0}^{N}a_n\sigma_T^n (\frac{1}{K}(T))=0.
\]

Let us define the defect operator and the defect space of a \(K\)-contraction $T$ by
\begin{displaymath}
	C = \frac{1}{K}(T)^{\frac{1}{2}}\text{ and }\mathscr{D} = \overline{\operatorname{Im}C}.
\end{displaymath}
We call an isometric linear map \(j\colon H \to H_K(\mathscr{E})\) which intertwines the tuples 
\(T^{*}\in L(H)^{d}\) and \(M_{z}^{*}\in L(H_K(\mathscr{E}))^{d}\) componentwise a $K$-dilation
of $T$. By definition a $K$-dilation \(j\colon H \to H_K(\mathscr{E})\) is minimal if the only 
reducing subspace of \(M_{z}\in L(H_K(\mathscr{E}))^{d}\) that contains the image of $j$ is 
\(H_K(\mathscr{E})\). 

Exactly as for row-\(m\)-hypercontractions of class \(C._{0}\), one can construct a canonical $K$-dilation 
for each \(K\)-contraction. 

\begin{thm}\label{thminter}
Let \(T\in L(H)^{d}\) be a pure \(K\)-contraction. Then
\begin{displaymath}
	j\colon H\to H_K(\mathscr{D}),\thickspace j(h)=\sum_{\alpha\in\mathbb{N}^d}a_{\vert\alpha\vert}\gamma_{\alpha}CT^{*\alpha}hz^{\alpha}
\end{displaymath}	
is a well defined isometry such that
\begin{displaymath}
	jT^{*}_{i}=M^{*}_{z_{i}}j\quad(i=1,\dots,d).
\end{displaymath}
\end{thm}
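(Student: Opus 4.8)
The plan is to work with the explicit monomial description of $H_K(\mathscr E)$. Since $K$ is unitarily invariant with $K(z,w) = \sum_{n\ge 0} a_n \langle z,w\rangle^n$, the multinomial theorem gives $\langle z,w\rangle^n = \sum_{|\alpha|=n}\gamma_\alpha z^\alpha\overline w^\alpha$, so the monomials $z^\alpha$ form an orthogonal system in $H_K$ with $\|z^\alpha\|^2 = 1/(a_{|\alpha|}\gamma_\alpha)$, and an element $f = \sum_\alpha f_\alpha z^\alpha$ of $H_K(\mathscr E)$ (with coefficients $f_\alpha \in \mathscr E$) has norm $\|f\|^2 = \sum_\alpha \|f_\alpha\|^2/(a_{|\alpha|}\gamma_\alpha)$. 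First I would use this to read off that the candidate $j(h)$ has $\alpha$-th coefficient $a_{|\alpha|}\gamma_\alpha C T^{*\alpha}h \in \mathscr D$, so that, using $C^2 = \frac1K(T)$ and $\sigma_T^n(X) = \sum_{|\alpha|=n}\gamma_\alpha T^\alpha X T^{*\alpha}$,
\[
\sum_{\alpha} \frac{\|a_{|\alpha|}\gamma_\alpha C T^{*\alpha}h\|^2}{a_{|\alpha|}\gamma_\alpha} = \sum_{\alpha} a_{|\alpha|}\gamma_\alpha \langle T^\alpha C^2 T^{*\alpha}h, h\rangle = \sum_{n=0}^\infty a_n \langle \sigma_T^n(\tfrac1K(T))h, h\rangle .
\]

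Second, I would invoke the pureness hypothesis, which says precisely that $\sum_{n\ge 0} a_n \sigma_T^n(\frac1K(T)) = 1_H$ in the strong operator topology. Since every summand $\langle \sigma_T^n(\frac1K(T))h,h\rangle = \sum_{|\alpha|=n}\gamma_\alpha\|CT^{*\alpha}h\|^2$ is nonnegative, continuity of the inner product applied to the SOT-convergent partial sums shows that the displayed series equals $\|h\|^2$. Finiteness of this value means that the net of finite partial sums of $\sum_\alpha a_{|\alpha|}\gamma_\alpha C T^{*\alpha}h\, z^\alpha$ is Cauchy in $H_K(\mathscr D)$, so $j(h)$ is a well-defined element of $H_K(\mathscr D)$, and the same computation gives $\|j(h)\| = \|h\|$. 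Linearity of $j$ being immediate, $j$ is an isometry.

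Third, for the intertwining relation I would compute the adjoint $M_{z_i}^*$ on coefficient sequences: from $\langle M_{z_i}^* f, g\rangle = \langle f, M_{z_i}g\rangle$ and $(M_{z_i}g)_\beta = g_{\beta - e_i}$ one obtains $(M_{z_i}^* f)_\alpha = \frac{a_{|\alpha|}\gamma_\alpha}{a_{|\alpha|+1}\gamma_{\alpha+e_i}}\, f_{\alpha+e_i}$. Applying this to $f = j(h)$ and using that the components of $T^*$ commute, so $T^{*(\alpha+e_i)} = T^{*\alpha}T_i^*$, the $\alpha$-th coefficient of $M_{z_i}^* j(h)$ collapses to $a_{|\alpha|}\gamma_\alpha C T^{*\alpha}(T_i^* h)$, which is exactly the $\alpha$-th coefficient of $j(T_i^* h)$. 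Hence $M_{z_i}^* j = j T_i^*$ for $i = 1,\dots,d$.

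I expect the only genuine subtlety — the "hard part" — to be the bookkeeping around convergence: justifying that the formal coefficient-wise norm computation is legitimate (it is, since all terms are nonnegative, so the rearrangement of the double sum is harmless) and that SOT-convergence of the partial sums $\sum_{n=0}^N a_n\sigma_T^n(\frac1K(T))$ transfers to convergence of the scalar series $\sum_{n} a_n\langle\sigma_T^n(\frac1K(T))h,h\rangle$ to $\langle h,h\rangle$. Everything else is a direct, essentially algebraic, verification on coefficient sequences.
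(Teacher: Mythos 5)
Your proof is correct. Note that the paper itself does not prove Theorem \ref{thminter} at all; it defers to \cite[Theorem 2.15]{Sch18}, so there is no in-paper argument to compare against. Your argument is the standard direct verification one expects there: the identification $\|z^\alpha\|^2 = 1/(a_{|\alpha|}\gamma_\alpha)$ for the diagonal kernel $K(z,w)=\sum_\alpha a_{|\alpha|}\gamma_\alpha z^\alpha\overline{w}^\alpha$, the regrouping $\sum_{|\alpha|=n}\gamma_\alpha\|CT^{*\alpha}h\|^2=\langle\sigma_T^n(\tfrac1K(T))h,h\rangle$ (valid since the terms are nonnegative), the pureness hypothesis read as ${\rm SOT}\text{-}\sum_n a_n\sigma_T^n(\tfrac1K(T))=1_H$ to get $\|j(h)\|=\|h\|$, and the coefficient formula for $M_{z_i}^*$ to get the intertwining. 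The only points worth making explicit are the two you already flag plus one more: you implicitly use that $H_K(\mathscr D)$ is exactly the weighted $\ell^2$-space of coefficient sequences with weights $1/(a_{|\alpha|}\gamma_\alpha)$ (standard for unitarily invariant kernels, and needed both to conclude well-definedness from square-summability and to justify the adjoint computation), and that $M_{z_i}$ is bounded (guaranteed by the standing hypothesis on $\inf_n a_n/a_{n+1}$ and $\sup_n a_n/a_{n+1}$ via \cite[Theorem A.1]{Arv2007}), so that $M_{z_i}^*$ exists and your coefficient-wise formula for it is legitimate.
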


For a proof, see \cite[Theorem 2.15]{Sch18}. For \(h\in H\) and 
\(f=\sum_{\alpha\in\mathbb{N}^{d}}f_{\alpha}z^{\alpha}\in H_{K}(\mathscr{D})\)
\begin{displaymath}
	\left\langle h,j^{*}f\right\rangle=\sum_{\alpha\in\mathbb{N}^{d}}\left\langle CT^{*\alpha}h,f_{\alpha}\right\rangle=\sum_{\alpha\in\mathbb{N}^{d}}\left\langle h,T^{\alpha}Cf_{\alpha}\right\rangle.
\end{displaymath}
An application of the uniform boundedness principle shows that the adjoint \(j^{*}\colon H_{K}(\mathscr{D})\to H\) of the isometry \(j\) acts as
\begin{displaymath}
 j^{*}\left(\sum_{\alpha\in\mathbb{N}^{d}}f_{\alpha}z^{\alpha}\right)=\sum_{\alpha\in\mathbb{N}^{d}}T^{\alpha}Cf_{\alpha}.
\end{displaymath}

 Since \(j\) intertwines \(T^{*}\) and \(M_{z}^{*}\) componentwise, the space
\begin{displaymath}
	M = H_K(\mathscr{D})\ominus\operatorname{Im}j\subset H_K(\mathscr{D})
\end{displaymath} 
is invariant for \(M_{z}\in L(H_K(\mathscr{D}))^{d}\). 

In the following we show that the wandering subspace of $M_z$ restricted to $M$ can be described in terms of a
suitable $K$-inner function. Recall that a closed subspace $\mathscr W \subset H$ is called a
wandering subspace for a commuting tuple $S \in L(H)^d$ if
\[
\mathscr W \perp S^{\alpha} \mathscr W  \quad \quad (\alpha \in \mathbb N^d \setminus \{0\}).
\]
The space $\mathscr W$ is called a generating wandering subspace for $S$ if in addition
$H = \bigvee(S^{\alpha} \mathscr W; \alpha \in \mathbb N^d)$. For each closed $S$-invariant subspace
$L \subset H$, the space
\[
W_S(L) = L \ominus \sum_{i=1}^d S_i L
\]
is a wandering subspace for $S$, usually called the wandering subspace associated with $S$ on $L$.
If $\mathscr W$ is a generating wandering subspace for $S$, then an elementary argument shows
that necessarily $\mathscr W = W_S(H)$.

In the following we write
\begin{displaymath}
	W(M) = M\ominus\left(\sum_{i=1}^{d}M_{z_{i}}M\right)
\end{displaymath}
for the wandering subspace associated with the restriction of $M_z$ to the invariant subspace $M = {\rm Im}\,j$. 
Our main tool will be the matrix operator
\begin{align*}
	M_{z}^{*}M_{z}=(M_{z_{i}}^{*}M_{z_{j}})_{1\leq i,j\leq d}\in L(H_K(\mathscr{D})^{d}).
\end{align*}
Since the row operator \(M_{z}\colon H_{K}(\mathscr{D})^{d}\to H_{K}(\mathscr{D})\) has closed range, the operator
\begin{displaymath}
	M_{z}^{*}M_{z}\colon\operatorname{Im}M_{z}^{*}\to\operatorname{Im}M_{z}^{*}
\end{displaymath}
is invertible. We denote its inverse by \((M_{z}^{*}M_{z})^{-1}\).
In the following we consider the operators
\begin{displaymath}
	\delta\colon H_K(\mathscr{D})\to H_K(\mathscr{D}),\thickspace\delta\left(\sum_{n=0}^{\infty}\sum_{\vert\alpha\vert=n}f_{\alpha}z^{\alpha}\right)=f_{0}+\sum_{n=1}^{\infty}\frac{a_{n}}{a_{n-1}}\sum_{\vert\alpha\vert=n}f_{\alpha}z^{\alpha}
\end{displaymath}
and
\begin{displaymath}
\Delta\colon H_K(\mathscr{D})\to H_K(\mathscr{D}),\thickspace\Delta\left(\sum_{n=0}^{\infty}\sum_{\vert\alpha\vert=n}f_{\alpha}z^{\alpha}\right)=\sum_{n=0}^{\infty}\frac{a_{n+1}}{a_{n}}\sum_{\vert\alpha\vert=n}f_{\alpha}z^{\alpha}.
\end{displaymath}

By definition \(\delta\) and \(\Delta\) are diagonal operators with respect to the orthogonal decomposition 
\(H_{K}( \mathscr{ D})=\oplus^\infty_{n=0}H_{n}( \mathscr{D})\) of \(H_{K}(\mathscr {D})\) into the spaces 
\(H_{n}( \mathscr{D})\) of all \(\mathscr{D}\)-valued homogenous polynomials of degree \(n\). Our hypotheses 
on the sequence $(a_n/a_{n+1})$ imply that \(\delta\) and \(\Delta\) are invertible 
positive operators on \(H_{K}(\mathscr{D})\). An elementary calculation shows that
\begin{displaymath}
\delta M_{z_{i}}=M_{z_{i}}\Delta
\end{displaymath}
for \(i=1,\ldots,d\). 
\begin{lem}\label{lemsdelta}
For \(f\in H_{K}(\mathscr{D})\), we have
\begin{displaymath}
	(M_{z}^{*}M_{z})^{-1}(M_{z}^{*}f)=M_{z}^{*}\delta f=\left(\oplus \Delta\right)M_{z}^{*} f.
\end{displaymath}
In particular the row operator
\begin{displaymath}	
	\delta M_{z}\colon H_{K}(\mathscr{D})^{d}\to H_{K}(\mathscr{D})
\end{displaymath}
defines the trivial extension of the operator
\begin{displaymath}
	M_{z}\left(M_{z}^{*}M_{z}\right)^{-1}\colon\operatorname{Im}M_{z}^{*}\to H_{K}(\mathscr{D}).
\end{displaymath}
\end{lem}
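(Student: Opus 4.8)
The statement has two assertions: the chain of equalities
$(M_z^*M_z)^{-1}(M_z^*f) = M_z^*\delta f = (\oplus\Delta)M_z^*f$, and the consequence that $\delta M_z$ is the trivial (i.e., zero-on-the-orthogonal-complement) extension of $M_z(M_z^*M_z)^{-1}$ from $\operatorname{Im}M_z^*$ to all of $H_K(\mathscr D)^d$. The heart of the matter is the identity $M_z^*\delta f = (\oplus\Delta)M_z^* f$ together with the fact that this common element is the genuine inverse image of $M_z^*f$ under $M_z^*M_z|_{\operatorname{Im}M_z^*}$.

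First I would establish $M_z^*\delta = (\oplus\Delta)M_z^*$ as an operator identity on $H_K(\mathscr D)$. Both sides are adjoints of operators built from the $M_{z_i}$: taking adjoints, the claim is $\delta^* M_z = M_z(\oplus\Delta)^*$, i.e. (since $\delta,\Delta$ are positive, hence self-adjoint, and $\oplus\Delta$ acts componentwise) $\delta M_{z_i} = M_{z_i}\Delta$ for each $i$, which is precisely the intertwining relation already recorded in the excerpt right before the lemma. So this equality is essentially free once one unwinds the adjoints; the only care needed is to check that $(\oplus\Delta)^* = \oplus\Delta^* = \oplus\Delta$ and that $M_z(\oplus\Delta) = (M_{z_1}\Delta,\ldots)$ as a row operator, which is a one-line bookkeeping step.

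Next I would verify that $g := M_z^*\delta f$ lies in $\operatorname{Im}M_z^*$ and satisfies $M_z^*M_z\, g = M_z^*f$; this identifies $g$ with $(M_z^*M_z)^{-1}(M_z^*f)$ by definition of that inverse on $\operatorname{Im}M_z^*$. That $g \in \operatorname{Im}M_z^*$ is immediate. For the equation $M_z^*M_z M_z^*\delta f = M_z^* f$: apply the already-proved relation to push $\delta$ across, or better, compute directly on homogeneous components. Writing $f = \sum_n f_n$ with $f_n \in H_n(\mathscr D)$, one has $M_z^* f = $ (something in degree $n-1$ built from the degree-$n$ part), and $M_z M_z^*$ restricted to $H_n(\mathscr D)$ is a scalar multiple of the identity — this is the standard fact that for these unitarily invariant kernels $M_z M_z^*$ acts as $\frac{a_{n-1}}{a_n}\,\mathrm{id}$ on $H_n(\mathscr D)$ (equivalently $M_z^*M_z$ on the relevant homogeneous summand of $H_K(\mathscr D)^d$ scales by the reciprocal). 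Combined with the definition of $\delta$ (which multiplies degree $n$ by $a_n/a_{n-1}$), the factors cancel and one reads off $M_z^*M_z\,M_z^*\delta f = M_z^* f$ degree by degree. This homogeneous-component computation is the one genuinely computational step, but it is routine given the explicit kernel.

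Finally, the ``in particular'' clause is a formal consequence: the operator $M_z(M_z^*M_z)^{-1}$ on $\operatorname{Im}M_z^*$ sends $M_z^* f \mapsto M_z M_z^*\delta f$, and its trivial extension to $H_K(\mathscr D)^d$ is the operator vanishing on $(\operatorname{Im}M_z^*)^\perp = \ker M_z$; but $\delta M_z$ also vanishes on $\ker M_z$ and, on a general vector, $\delta M_z(u) = \delta M_z\big(P_{\operatorname{Im}M_z^*}u\big)$, and writing $P_{\operatorname{Im}M_z^*}u = M_z^* v$ one gets $\delta M_z M_z^* v = M_z \Delta M_z^* v = M_z(\oplus\Delta)(M_z^*v)\cdot$(adjusting notation) $= M_z(M_z^*M_z)^{-1}(M_z^*v)$ by the first part — wait, I should instead simply note $\delta M_z = M_z(\oplus\Delta)$ as row operators (again from $\delta M_{z_i} = M_{z_i}\Delta$), so on $\operatorname{Im}M_z^*$ we get $\delta M_z = M_z(\oplus\Delta)|_{\operatorname{Im}M_z^*} = M_z(M_z^*M_z)^{-1}$ by the established formula, while on $\ker M_z$ both sides vanish. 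I expect the main obstacle to be purely notational: keeping straight the three distinct spaces ($H_K(\mathscr D)$, its $d$-fold sum $H_K(\mathscr D)^d$, and the operators $\delta$, $\oplus\Delta$ acting on them respectively) and making sure the scalar $a_{n-1}/a_n$ versus $a_n/a_{n-1}$ bookkeeping is consistent — no serious mathematical difficulty arises once $M_z M_z^* = \frac{a_{n-1}}{a_n}\,\mathrm{id}$ on $H_n(\mathscr D)$ is in hand.
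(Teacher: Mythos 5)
Your proposal is correct and follows essentially the same route as the paper: both arguments rest on the fact (quoted from Guo--Hu--Xu) that $M_zM_z^*$ acts as $\frac{a_{n-1}}{a_n}\,\mathrm{id}$ on $H_n(\mathscr D)$ together with the intertwining relation $\delta M_{z_i}=M_{z_i}\Delta$, the paper phrasing the key cancellation as $M_zM_z^*\delta f=f$ after reducing to $f(0)=0$ while you verify $M_z^*M_z(M_z^*\delta f)=M_z^*f$ degree by degree and then handle $\ker M_z$ separately for the trivial-extension claim. One cosmetic slip: on $M_z^*H_n(\mathscr D)$ the operator $M_z^*M_z$ scales by the same factor $\frac{a_{n-1}}{a_n}$, not its reciprocal, but the cancellation against the factor $\frac{a_n}{a_{n-1}}$ coming from $\delta$ that you actually use is the right one.
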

\begin{proof}
	Since the column operator \(M_{z}^{*}\) annihilates the constant functions, to prove the first identity, 
	we may suppose  that \(f(0)=0\). With respect to the orthogonal decomposition 
	\(H_{K}(\mathscr{D})=\bigoplus_{n=0}^{\infty}H_{n}(\mathscr{D})\) the operator \(M_{z}M_{z}^{*}\) acts as (Lemma 4.3 in \cite{Guo04})
	\begin{displaymath}
		M_{z}M_{z}^{*}\left(\sum_{n=0}^{\infty}f_{n}\right)=\sum_{n=1}^{\infty}\left(\frac{a_{n-1}}{a_{n}}\right)f_{n}.
	\end{displaymath}
	Hence \(M_{z}M_{z}^{*}\delta f=f\) and
	\begin{displaymath}
		\left(M_{z}^{*}M_{z}\right)^{-1}M_{z}^{*}f = 
		\left(M_{z}^{*}M_{z}\right)^{-1}\left(M_{z}^{*}M_{z}\right)M_{z}^{*}\delta f=M_{z}^{*}\delta f = (\oplus \Delta)M_z^*f.
	\end{displaymath}
	Since any two diagonal operators commute, it follows in particular that 
	\(M_{z}\left(M_{z}^{*}M_{z}\right)^{-1}M_{z}^{*}=\delta\left(M_{z}M_{z}^{*}\right)\). Thus also the second assertion follows.
\end{proof}

The preceding proof shows in particular that the orthogonal projection of \(H_{K}(\mathscr{D})\) onto \(\operatorname{Im}M_{z}\) acts as
\begin{displaymath}
	P_{\operatorname{Im}M_{z}}=	M_{z}(M_{z}^{*}M_{z})^{-1}M_{z}^{*}=\delta (M_{z}M_{z}^{*}) = P_{H_K(\mathscr D)\ominus \mathscr D},
\end{displaymath}
where \(\mathscr{D}\subset H_{K}(\mathscr{D})\) is regarded as the closed subspace consisting of all constant functions.
As in the single-variable case we call the operator defined by 
\(M_{z}^{\prime}=\delta M_{z}\in L(H_{K}(\mathscr{D})^{d},H_{K}(\mathscr {D}))\) the Cauchy dual of the 
multiplication tuple \(M_{z}\).

We use the operator $\Delta_{T} \in L(H)$ defined by
\begin{displaymath}
	\Delta_{T}=j^{*}\Delta j
\end{displaymath}
to give a first desciption of the wandering subspace \(W(M)\) of $M_z$ restricted to the invariant 
subspace \(M=(\operatorname{Im}j)^\perp\).

\begin{thm}\label{thmwand1}
	A function \(f\in H_{K}(\mathscr{D})\) is an element of the wandering subspace \(W(M)\) of \(M=(\operatorname{Im}j)^{\perp}\in{\operatorname{ Lat}}(M_{z},H_{K}(\mathscr{D}))\) if and only if
	\begin{displaymath}
		f=f_0+M_{z}^{\prime}(jx_{i})_{i=1}^{d}
	\end{displaymath} 
	for some vectors \(f_0\in\mathscr{D}\), \(x_1,\ldots,x_d\in H\) with \((jx_{i})_{i=1}^{d}\in M_{z}^{*}H_{K}(\mathscr {D})\) and
	\begin{displaymath}
		Cf_0+T(\Delta_{T}x_{i})_{i=1}^{d}=0.
	\end{displaymath}
	In this case \((jx_{i})_{i=1}^{d}=M_{z}^{*}f\).
	\end{thm}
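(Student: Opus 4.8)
The plan is to characterize $W(M)$ by combining the explicit description of the projection $P_{\operatorname{Im} M_z}$ with the intertwining property of $j$. First I would observe that $f \in W(M)$ means $f \in M$ (equivalently $j^* f = 0$) and $f \perp M_{z_i} M$ for all $i$, i.e. $M_{z_i}^* f \in (M_z^* M)^\perp$ inside each copy of $H_K(\mathscr D)$; since $M = (\operatorname{Im} j)^\perp$, the orthogonal complement of $M_z^* M$ relates to $\operatorname{Im} j$ via the intertwining relation $j T_i^* = M_{z_i}^* j$. Concretely, $W(M) = M \ominus \overline{\sum_i M_{z_i} M}$, and a vector $f \in M$ lies in $W(M)$ iff $f \perp M_{z_i} g$ for every $g \in M$, i.e. $P_M M_{z_i}^* f = 0$, which says $M_{z_i}^* f \in \operatorname{Im} j$ for each $i$. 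So the first key step is: $f \in W(M)$ iff $j^* f = 0$ and there exist $x_1, \ldots, x_d \in H$ with $M_{z_i}^* f = j x_i$ for $i = 1, \ldots, d$; this already forces $(jx_i)_{i=1}^d = M_z^* f \in M_z^* H_K(\mathscr D)$ and explains the final sentence of the statement.

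Next I would decompose $f$ according to $H_K(\mathscr D) = \mathscr D \oplus (H_K(\mathscr D) \ominus \mathscr D) = \ker M_z^* \oplus \operatorname{Im} M_z$. Write $f = f_0 + P_{\operatorname{Im} M_z} f$ with $f_0 = f(0) \in \mathscr D$. Using Lemma~\ref{lemsdelta}, $P_{\operatorname{Im} M_z} f = M_z (M_z^* M_z)^{-1} M_z^* f = \delta M_z M_z^* f = M_z' (M_z^* f) = M_z'(jx_i)_{i=1}^d$, which gives the claimed form $f = f_0 + M_z'(jx_i)_{i=1}^d$. Conversely, any such $f$ with $(jx_i)_{i=1}^d \in M_z^* H_K(\mathscr D)$ automatically satisfies $M_z^* f = (jx_i)_{i=1}^d$ (because $M_z^*$ kills $f_0$ and acts as the identity-up-to-$M_z^* M_z$ bookkeeping on the range part, using that $M_z^* M_z' = M_z^* \delta M_z$ restricted to $\operatorname{Im} M_z^*$ is the identity), so $M_{z_i}^* f = j x_i \in \operatorname{Im} j$ and hence $f \perp M_{z_i} M$; thus the orthogonality half of membership in $W(M)$ is equivalent to the stated form.

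The remaining, and I expect most delicate, step is to translate the condition $j^* f = 0$ (i.e. $f \in M$) into the operator equation $C f_0 + T(\Delta_T x_i)_{i=1}^d = 0$. Here I would apply the explicit formula $j^*\big(\sum_\alpha g_\alpha z^\alpha\big) = \sum_\alpha T^\alpha C g_\alpha$ from the paragraph after Theorem~\ref{thminter} to $f = f_0 + M_z'(jx_i)_{i=1}^d$. The contribution of $f_0 \in \mathscr D$ is simply $C f_0$. For the other term one computes $j^* M_z' (jx_i)_{i=1}^d$; since $M_z' = \delta M_z$ and $\delta M_{z_i} = M_{z_i} \Delta$, this equals $j^* \sum_i M_{z_i} \Delta (jx_i)$, and using $M_{z_i}^* $-adjoint bookkeeping together with $j M_{z_i}^* \cdots$ — more precisely using $j^* M_{z_i} = T_i j^*$ (the adjoint of $j T_i^* = M_{z_i}^* j$) and $\Delta$'s definition — one should arrive at $\sum_i T_i j^* \Delta j x_i = \sum_i T_i \Delta_T x_i = T(\Delta_T x_i)_{i=1}^d$, where $\Delta_T = j^* \Delta j$ as defined in the text. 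Summing, $j^* f = C f_0 + T(\Delta_T x_i)_{i=1}^d$, so $f \in M$ iff this vanishes. The one point demanding care is the interaction of $j^*$ with $\delta$: since $j$ need not intertwine $\delta$ and $\Delta$ directly, I would route the computation through $\delta M_{z_i} = M_{z_i} \Delta$ and the relation $j^* M_{z_i} = T_i j^*$, checking that the homogeneous-degree diagonal structure of $\delta, \Delta$ makes the rearrangement legitimate. Assembling the two halves — the form of $f$ encodes the orthogonality, the operator equation encodes $f \perp \operatorname{Im} j$ — yields the theorem.
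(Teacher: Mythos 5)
Your proposal is correct and follows essentially the same route as the paper's proof: the same characterization of $W(M)$ via $j^{*}f=0$ together with $M_{z_{i}}^{*}f\in\operatorname{Im}j$, the same decomposition $f=f(0)+\delta M_{z}M_{z}^{*}f$ via Lemma~\ref{lemsdelta}, and the same computation $j^{*}f=Cf(0)+T(\Delta_{T}x_{i})_{i=1}^{d}$ routed through $\delta M_{z_{i}}=M_{z_{i}}\Delta$ and $j^{*}M_{z_{i}}=T_{i}j^{*}$. The point you flag as delicate is handled in the paper exactly as you propose.
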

\begin{proof}
	Note that a function \(f\in H_{K}(\mathscr{D})\) belongs to the wandering subspace 
	\(W(M)=M\ominus\sum_{i=1}^{d}z_{i}M\) of \(M_{z}\) on 
	\(M=\operatorname{Ker}j^{*}\in\operatorname{Lat}(M_{z},H_{K}(\mathscr{D}))\) if and 
	only if \(j^{*}f=0\) and \((1_{H_{K}(\mathscr{D})}-jj^{*})M_{z_{i}}^{*}f=0\) for \(i=1,\ldots,d\). 
	Using the remark following Lemma \ref{lemsdelta}, we obtain, for \((x_{i})_{i=1}^{d}\in H^{d}\) 
	and \(f\in H_{K}(\mathscr{D})\) with \((jx_{i})_{i=1}^{d}=M_{z}^{*}f\),
	\begin{align*}
		j^{*}f&=j^{*}(f(0)+\delta M_{z}M_{z}^{*}f)\\
		&=Cf(0)+j^{*}M_{z}(\Delta jx_{i})_{i=1}^{d}\\
		&=Cf(0)+T(j^{*}\Delta jx_{i})_{i=1}^{d}\\
		&=Cf(0)+T(\Delta_{T}x_{i})_{i=1}^{d}.
	\end{align*} 
	Thus if \(f\in W(M)\), then \((x_{i})_{i=1}^{d}=(j^{*}M_{z_{i}}^{*}f)_{i=1}^{d}\) defines a tuple in \(H^{d}\) with \((jx_{i})_{i=1}^{d}=M_{z}^{*}f\) such that \(Cf(0)+T(\Delta_{T}x_{i})_{i=1}^{d}=j^{*}f=0\) and 
	\begin{align*}
		f=f(0)+(f-f(0))=f(0)+M_{z}(M_{z}^{*}M_{z})^{-1}M_{z}^{*}f
		=f(0)+M_{z}^{\prime}(jx_{i})_{i=1}^{d}.
	\end{align*}
	Conversely, if \(f=f_{0}+M_{z}^{\prime}(jx_{i})_{i=1}^{d}\) with \(f_{0}\in\mathscr{D}\), \(x_{1},\ldots,x_{d}\) as in Theorem \ref{thmwand1}, then using Lemma \ref{lemsdelta} we find that 
	\begin{displaymath}
		M_{z}^{*}f=M_{z}^{*}M_{z}(M_{z}^{*}M_{z})^{-1}(jx_{i})_{i=1}^{d}=(jx_{i})_{i=1}^{d}.
	\end{displaymath}
	Since \(j\) is an isometry, it follows that \(jj^{*}M_{z_{i}}^{*}f=jx_{i}=M_{z_{i}}^{*}f\) for \(i=1,\ldots,d\). Since \(j^{*}f=Cf(0)+T(\Delta_{T}x_{i})_{i=1}^{d}=0\), we have shown that \(f\in W(M)\).
\end{proof}

\begin{lem}\label{normwand}
Let \(T \in L(H)^d\) be a pure \(K\)-contraction and let
\begin{displaymath}
f=f_0+M_{z}^{\prime}(jx_{i})_{i=1}^{d}
\end{displaymath}
be a representation of a function \(f\in W(M)\) as in Theorem \ref{thmwand1}.
Then we have
\begin{displaymath}
\Vert f\Vert^{2}=\Vert f_0\Vert^2+\sum_{i=1}^{d}\langle\Delta_{T}x_{i},x_{i}\rangle.
\end{displaymath}
\end{lem}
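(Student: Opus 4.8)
The plan is to expand $\|f\|^2 = \langle f,f\rangle$ using the representation $f = f_0 + M_z'(jx_i)_{i=1}^d$ from Theorem~\ref{thmwand1} and to evaluate the two resulting inner products separately, the key inputs being Lemma~\ref{lemsdelta} and the last assertion of Theorem~\ref{thmwand1} that $M_z^*f = (jx_i)_{i=1}^d$.

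First I would dispose of the term $\langle f, f_0\rangle$. Since $K(z,0) = k(0) = a_0 = 1$, the reproducing property at the origin gives $\langle g, e\rangle_{H_K(\mathscr D)} = \langle g(0), e\rangle_{\mathscr D}$ for every $g \in H_K(\mathscr D)$ and every constant function $e \in \mathscr D$. The operator $\delta$ leaves the constant term of a power series unchanged, so $M_z'(jx_i)_{i=1}^d = \delta M_z(jx_i)_{i=1}^d$ vanishes at $0$; hence $f(0) = f_0$ and $\langle f, f_0\rangle = \langle f_0, f_0\rangle = \|f_0\|^2$.

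Next I would compute $\langle f, M_z'(jx_i)_{i=1}^d\rangle$. Writing $M_z' = \delta M_z$ and using that $\delta$ is self-adjoint, this equals $\langle M_z^*\delta f, (jx_i)_{i=1}^d\rangle$ in $H_K(\mathscr D)^d$. By Lemma~\ref{lemsdelta} we have $M_z^*\delta f = (\oplus\Delta)M_z^*f$, and $M_z^*f = (jx_i)_{i=1}^d$ by Theorem~\ref{thmwand1}, so $M_z^*\delta f = (\Delta jx_i)_{i=1}^d$. Therefore $\langle f, M_z'(jx_i)_{i=1}^d\rangle = \sum_{i=1}^d\langle\Delta jx_i, jx_i\rangle = \sum_{i=1}^d\langle j^*\Delta jx_i, x_i\rangle = \sum_{i=1}^d\langle\Delta_T x_i, x_i\rangle$ by the definition $\Delta_T = j^*\Delta j$. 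Adding the two contributions gives the assertion.

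The only point requiring a little care is the bookkeeping with $\delta$ and $M_z^*$: one has to apply the identity $M_z^*\delta f = (\oplus\Delta)M_z^*f$ to $f$ itself and not to $f - f_0$, and combine it correctly with the reproducing-kernel evaluation of $\langle f, f_0\rangle$; the remaining manipulations are routine. The pureness hypothesis on $T$ is used only indirectly, to guarantee that $j$ (Theorem~\ref{thminter}), and hence $\Delta_T$, is well defined.
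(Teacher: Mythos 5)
Your proof is correct and follows essentially the same route as the paper: the heart of both arguments is the chain $M_z^*\delta f=(\oplus\Delta)M_z^*f=(\Delta jx_i)_{i=1}^d$ from Lemma~\ref{lemsdelta} together with $\Delta_T=j^*\Delta j$. The only (cosmetic) difference is that the paper isolates the $\Vert f_0\Vert^2$ term via the orthogonality $\mathscr D\perp\operatorname{Im}M_z'=\operatorname{Im}M_z$, whereas you obtain it from the reproducing property at the origin; both are immediate.
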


\begin{proof}
	Since by Lemma \ref{lemsdelta}
	\begin{displaymath}
		\operatorname{Im}{M_{z}^{\prime}}=M_{z}(M_{z}^{*}M_{z})^{-1}M_{z}^{*}H_{K}(\mathscr{D})=\operatorname{Im}{M_{z}}=H_K(\mathscr{D})\ominus\mathscr{D},
	\end{displaymath}
	it follows that
	\begin{align*}
		\Vert f\Vert^{2}-\Vert f_{0}\Vert^{2}
		&=\Vert M_{z}^{\prime}(jx_{i})_{i=1}^{d}\Vert^{2}\\
		&=\langle(M_{z}^{*}M_{z})^{-1}M_{z}^{*}f,(jx_{i})_{i=1}^{d}\rangle\\
		&=\langle(\oplus j^{*})M_{z}^{*}\delta f,(x_{i})_{i=1}^{d}\rangle\\
		&=\langle(j^{*}\Delta jx_{i})_{i=1}^{d},(x_{i})_{i=1}^{d}\rangle.
	\end{align*}
	Since by definition \(\Delta_{T}=j^{*}\Delta j\), the assertion follows.
\end{proof}

Let \(T\in L(H)^{d}\) be a pure \(K\)-contraction.
Then \(\Delta_{T}=j^{*}\Delta j\) is a positive operator with
\begin{displaymath}
	\langle\Delta_{T}x,x\rangle=\Vert\Delta^{\frac{1}{2}}jx\Vert^{2}\geq\Vert\Delta^{-\frac{1}{2}}\Vert^{-2}\Vert jx\Vert^{2}=\Vert\Delta^{-1}\Vert^{-1}\Vert x\Vert^{2}
\end{displaymath}
for all \(x\in H\). Hence \(\Delta_{T}\in L(H)\) is invertible and
\begin{displaymath}
(x,y)=\langle\Delta_{T}x,y\rangle
\end{displaymath}
defines a scalar product on \(H\) such that the induced norm \(\Vert\cdot \Vert_{T}\) is equivalent to the original norm with
\begin{displaymath}
	\Vert\Delta^{\frac{1}{2}}\Vert\Vert x\Vert\geq\Vert x\Vert_{T}\geq\Vert\Delta^{-\frac{1}{2}}\Vert^{-1}\Vert x\Vert
\end{displaymath}
for \(x\in H\). We write \(\tilde{H}\) for \(H\) equipped with the norm \(\Vert\cdot\Vert_{T}\). Then 
\begin{displaymath}
	I_{T}\colon H\to \tilde{H},\thickspace x\mapsto x
\end{displaymath}
is an invertible bounded operator such that
\begin{displaymath}
\langle	I_{T}^{*}x,y\rangle=\langle\Delta_{T}x,y\rangle\quad(x\in \tilde{H},y\in H).
\end{displaymath}
Hence \(I_{T}^{*}x=\Delta_{T}x\) for \(x\in\tilde{H}\). Let \(\tilde{T}=(\tilde{T}_{1},\ldots,\tilde{T}_{d})\colon\tilde{H}^{d}\to H\) be the row operator with components
\(\tilde{T}_{i}=T_{i}\circ I_{T}^{*}\in L(\tilde{H},H)\). Then 
\begin{align*}
\tilde{T}\tilde{T}^{*}
&=\sum_{i=1}^{d}T_{i}(I_{T}^{*}I_{T})T_{i}^{*}
=\sigma_{T}(\Delta_{T})
=\sigma_{T}(j^{*}\Delta j)
=j^{*}M_{z}(\oplus\Delta)M_{z}^{*}j\\
&=j^{*}(\delta M_{z}M_{z}^{*})j
=j^{*}P_{H_K(\mathscr D)\ominus \mathscr D}j
\end{align*}
and hence \(\tilde{T}\) is a contraction. As in \cite{Olo06} we use its defect operators
\begin{align*}
D_{\tilde{T}}&=(1_{\tilde{H}^d}-\tilde{T}^{*}\tilde{T})^{1/2}\in L(\tilde{H}^d),\\
D_{\tilde{T}^{*}}&=(1_{H}-\tilde{T}\tilde{T}^{*})^{1/2}=(j^{*}P_{\mathscr{D}}j)^{1/2}=C\in L(H).\\
\end{align*}
Here the identity \((j^{*}P_{\mathscr{D}}j)^{1/2}=C\) follows from the definition 
of \(j\) and the representation of \(j^{*}\) explained in 
the section following Theorem \ref{thminter}. We write 
\(\mathscr{D}_{\tilde{T}}=\overline{D_{\tilde{T}}\tilde{H}^d}\subset \tilde{H}^{d}\) and 
\(\mathscr{D}_{\tilde{T}^{*}}=\overline{D_{\tilde{T}^{*}}H}=\mathscr{D}\) for the defect spaces of \(\tilde{T}\). 
As in the classical single-variable theory of contractions it follows that 
\(\tilde{T}D_{\tilde{T}} = D_{\tilde{T}^{*}}\tilde{T}\) and that
\begin{displaymath}
U=
\left(
\begin{array}{c|c}
\tilde{T}&D_{\tilde{T}^{*}}\\
\hline
D_{\tilde{T}}&-\tilde{T}^{*}
\end{array}
\right)\colon\tilde{H}^{d}\oplus\mathscr{D}_{\tilde{T}^{*}}\to H\oplus \mathscr{D}_{\tilde{T}}
\end{displaymath}
is a well-defined unitary operator. In the following we define an analytically parametrized 
family \(W_{T}(z)\in L(\tilde{\mathscr{D}}, \mathscr{D})\) \((z\in \mathbb B)\) of operators on the closed 
subspace
\begin{displaymath}
	\tilde{\mathscr{D}} = \lbrace y\in\mathscr{D}_{\tilde{T}};
	\thickspace(\oplus jI_{T}^{-1})D_{\tilde{T}}y\in M_{z}^{*}H_{K}(\mathscr{D})\rbrace\subset\mathscr{D}_{\tilde{T}}
\end{displaymath}
such that
\begin{displaymath}
	W(M)=\lbrace W_{T}x;\thickspace x\in\tilde{\mathscr{D}}\rbrace,
\end{displaymath}
where \(W_{T}x\colon\mathbb{B}_{d}\to\mathscr{D}\)  acts as \((W_{T}x)(z)=W_{T}(z)x\). 
We equip \(\tilde{\mathscr{D}}\) with the norm \(\Vert y\Vert=\Vert y\Vert_{\tilde{H}^{d}}\) 
that it inherits as a closed subspace \(\tilde{\mathscr{D}}\subset\tilde{H}^d\). 

\begin{lem}\label{lemwand2}
Let \(T \in L(H)^{d}\) be a pure \(K\)-contraction. Then a function \(f\in H_{K}(\mathscr{D})\) belongs to the wandering subspace \(W(M)\) of
\begin{displaymath}
	M=(\operatorname{Im}j)^{\perp}\in\operatorname{Lat}(M_{z},H_{K}(\mathscr{ D}))
\end{displaymath}
if and only if there is a vector \(y \in \tilde{\mathscr{D}}\) with
\begin{displaymath}
	f = -\tilde{T}y+M_{z}^{\prime}(\oplus jI_{T}^{-1})D_{\tilde{T}}y.
\end{displaymath}
In this case \(\Vert f\Vert^2=\Vert y\Vert^2_{\tilde{H}^{d}}\).
\end{lem}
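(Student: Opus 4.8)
The plan is to translate the characterization of $W(M)$ from Theorem~\ref{thmwand1} into the language of the contraction $\tilde T$ and its defect operators. Recall from Theorem~\ref{thmwand1} that $f \in W(M)$ if and only if $f = f_0 + M_z'(jx_i)_{i=1}^d$ with $f_0 \in \mathscr D$, $(jx_i)_{i=1}^d \in M_z^* H_K(\mathscr D)$ and $Cf_0 + T(\Delta_T x_i)_{i=1}^d = 0$. First I would rewrite the constraint $Cf_0 + T(\Delta_T x_i)_{i=1}^d = 0$ using $\Delta_T x_i = I_T^* I_T x_i$ and $\tilde T_i = T_i \circ I_T^*$: setting $u = (I_T x_i)_{i=1}^d \in \tilde H^d$, the constraint becomes $D_{\tilde T^*} f_0 + \tilde T u = 0$ (using $C = D_{\tilde T^*}$ and $\mathscr D_{\tilde T^*} = \mathscr D$). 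That is precisely the statement that the pair $(u, f_0) \in \tilde H^d \oplus \mathscr D_{\tilde T^*}$ lies in the kernel of the row operator $(\tilde T \;\; D_{\tilde T^*})$, equivalently in the kernel of the first block-row of the unitary $U$.

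The key structural point is that, because $U$ is unitary, the kernel of its first block-row equals the image of its second block-row. Concretely, $(u, f_0)$ with $\tilde T u + D_{\tilde T^*} f_0 = 0$ holds if and only if there is $y \in \mathscr D_{\tilde T^*}$ with $u = D_{\tilde T} y$ and $f_0 = -\tilde T^* y$ — this is the standard colligation computation: $U(0 \oplus y) = (D_{\tilde T^*} y) \oplus (-\tilde T^* y)$... wait, I should be careful with which block maps where. The unitary is $U\colon \tilde H^d \oplus \mathscr D_{\tilde T^*} \to H \oplus \mathscr D_{\tilde T}$ with first component $\tilde T u + D_{\tilde T^*} x$ and second component $D_{\tilde T} u - \tilde T^* x$. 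So $(u,x) \in \ker(\text{first row})$ iff $(u,x) \perp U^*(H \oplus 0)$ iff $(u,x) \in U^*(0 \oplus \mathscr D_{\tilde T})$, i.e. $(u,x) = U^*(0 \oplus y)$ for some $y \in \mathscr D_{\tilde T}$. Computing $U^* = \begin{pmatrix} \tilde T^* & D_{\tilde T} \\ D_{\tilde T^*} & -\tilde T \end{pmatrix}$ (reading blocks appropriately), we get $u = D_{\tilde T} y$ and $x = -\tilde T y$, with the corresponding $f_0 = -\tilde T y$. Then $(jx_i)_{i=1}^d = (\oplus j)(x_i)_{i=1}^d = (\oplus j I_T^{-1}) u = (\oplus j I_T^{-1}) D_{\tilde T} y$, and the membership condition $(jx_i)_{i=1}^d \in M_z^* H_K(\mathscr D)$ becomes exactly the defining condition $y \in \tilde{\mathscr D}$. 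Substituting into $f = f_0 + M_z'(jx_i)_{i=1}^d$ gives $f = -\tilde T y + M_z'(\oplus j I_T^{-1}) D_{\tilde T} y$, as claimed.

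For the norm identity, I would invoke Lemma~\ref{normwand}: $\|f\|^2 = \|f_0\|^2 + \sum_{i=1}^d \langle \Delta_T x_i, x_i \rangle$. Now $\|f_0\|^2 = \|\tilde T y\|_H^2 = \langle \tilde T^* \tilde T y, y\rangle$, while $\sum_i \langle \Delta_T x_i, x_i\rangle = \langle (I_T^* I_T x_i)_{i=1}^d, (x_i)_{i=1}^d \rangle = \langle (I_T x_i)_{i=1}^d, (I_T x_i)_{i=1}^d\rangle_{\tilde H^d} = \|u\|_{\tilde H^d}^2 = \|D_{\tilde T} y\|_{\tilde H^d}^2 = \langle D_{\tilde T}^2 y, y\rangle$. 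Adding, $\|f\|^2 = \langle (\tilde T^* \tilde T + D_{\tilde T}^2) y, y\rangle$; but here one must be slightly careful since $\tilde T^* \tilde T$ acts on $\tilde H^d$ and $y \in \mathscr D_{\tilde T} \subset \tilde H^d$, so $\tilde T^* \tilde T + D_{\tilde T}^2 = \tilde T^* \tilde T + (1 - \tilde T^* \tilde T) = 1_{\tilde H^d}$, giving $\|f\|^2 = \|y\|_{\tilde H^d}^2$. I expect the main obstacle to be purely bookkeeping: keeping the various identifications straight — the three spaces $H$, $\tilde H$, and $\tilde H^d$, the operator $I_T$ and its adjoint $I_T^* = \Delta_T$, and the direct-sum operators $\oplus j$, $\oplus j I_T^{-1}$ — and making sure the colligation identity is applied on the correct summands so that the defining condition of $\tilde{\mathscr D}$ matches up exactly with the membership condition $(jx_i)_{i=1}^d \in M_z^* H_K(\mathscr D)$ from Theorem~\ref{thmwand1}. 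Once the dictionary is fixed, each direction of the equivalence is a direct substitution.
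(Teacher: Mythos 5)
Your proposal is correct and follows essentially the same route as the paper: both translate the characterization from Theorem~\ref{thmwand1} into the condition $\tilde T(I_Tx_i)_{i=1}^d+D_{\tilde T^*}f_0=0$, use the unitarity of the colligation $U$ to parametrize the solutions as $(D_{\tilde T}y,-\tilde Ty)=U^*(0\oplus y)$, identify the membership condition with the definition of $\tilde{\mathscr D}$, and derive the norm identity from Lemma~\ref{normwand} together with $\tilde T^*\tilde T+D_{\tilde T}^2=1_{\tilde H^d}$. The only cosmetic difference is that you phrase the colligation step as ``kernel of the first block-row equals $U^*(0\oplus\mathscr D_{\tilde T})$,'' which handles both implications at once, whereas the paper treats the two directions separately.
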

\begin{proof}
By Theorem \ref{thmwand1} a function \(f\in H_{K}(\mathscr{D})\) belongs to \(W(M)\) if and only if  it is of the form
\begin{displaymath}
	f=f_{0}+M_{z}^{\prime}(jx_{i})_{i=1}^{d}
\end{displaymath}
with \(f_0\in\mathscr{D}\) and \(x_{1},\ldots,x_{d}\in H\) such that \((jx_{i})_{i=1}^{d}\in M_{z}^{*}H_{K}(\mathscr{D})\) and
\begin{displaymath}
	\tilde{T}(I_{T}x_{i})_{i=1}^{d}+D_{\tilde{T}^{*}}f_{0}=0.
\end{displaymath}
Then \(y=D_{\tilde{T}}(I_{T}x_{i})_{i=1}^{d}-\tilde{T}^{*}f_{0}\in\mathscr{D}_{\tilde{T}}\) is a vector with
\begin{displaymath}
	U\left(
	\begin{array}{cc}
		(I_{T}x_{i})\\
		f_{0}
	\end{array}	\right)=\left(
	\begin{array}{cc}
		0\\
		y
	\end{array}	\right),
\end{displaymath}
or equivalently, with
\begin{displaymath}
\left(
\begin{array}{cc}
(I_{T}x_{i})\\
f_{0}
\end{array}	\right)=U^{*}\left(
\begin{array}{cc}
0\\
y
\end{array}	\right)=\left(
\begin{array}{cc}
D_{\tilde{T}}y\\
-\tilde{T}y
\end{array}	\right).
\end{displaymath}
But then \(y\in\tilde{\mathscr{D}}\) and \(f=-\tilde{T}y+M_{z}^{\prime}(\oplus jI_{T}^{-1})D_{\tilde{T}}y\). 
Conversely, if \(f\) is of this form, then using the definitions of \(\tilde{T}\), \(\tilde{\mathscr{D}}\)  
and the intertwining relation \(\tilde{T}D_{\tilde{T}}=D_{\tilde{T}^{*}}\tilde{T}\) one can easily show that 
the vectors defined by
\begin{displaymath}
	f_{0}=-\tilde{T}y\in\mathscr{D}\text{ and }(x_{i})_{i=1}^{d}=(\oplus I_{T}^{-1})D_{\tilde{T}}y\in H^{d}
\end{displaymath}
yield a representation \(f=f_{0}+M_{z}^{\prime}(jx_{i})_{i=1}^{d}\) as in Theorem \ref{thmwand1}. 
By Lemma \ref{normwand} and the definition of the scalar product on $\tilde{H}$ we find that
\begin{align*}
	\Vert f\Vert^{2}
	&=\Vert f_{0}\Vert^{2}+\sum_{i=1}^{d}\langle \Delta_{T}x_{i},x_{i}\rangle
	=\Vert\tilde{T}y\Vert^{2}+\sum_{i=1}^{d}\Vert I_{T}x_{i}\Vert_{\tilde{H}}^{2}\\
	&=\Vert\tilde{T}y\Vert^{2}+\Vert D_{\tilde{T}}y\Vert_{\tilde{H}^{d}}^{2}
	=\Vert y\Vert_{\tilde{H}^d}^{2}.
\end{align*}
\end{proof}

Recall that the reproducing kernel $K: \mathbb B_d \times \mathbb B_d \rightarrow \mathbb C$ is defined
by $K(z,w) = k(\langle z,w \rangle)$, where
\[
k\colon\mathbb{D}\to\mathbb{C},\thickspace k(z)  =\sum_{n=0}^{\infty}a_{n}z^{n}
\] 
is an analytic function with \(a_0=1,\) $a_n > 0$ for all $n$ such that 
\[
0 < \inf_n \frac{a_n}{a_{n+1}} \leq \sup_n \frac{a_n}{a_{n+1}} < \infty.
\]
Let us suppose in addition that the limit
\begin{displaymath}
	r = \lim_{n\to\infty}\frac{a_{n}}{a_{n+1}}
\end{displaymath}
exists. Then $r \in [1,\infty)$ is the radius of convergence of the power series defining $k$ and
by Theorem 4.5 in \cite{Guo04} the Taylor spectrum of \(M_{z}\in L(H_{K}(\mathscr{D}))^{d}\) is given by
\begin{displaymath}
	\sigma(M_{z})=\lbrace z\in\mathbb{C}^{d}; \Vert z\Vert\leq\sqrt{r}\rbrace.
\end{displaymath}
If \(T\in L(H)^{d}\) is a pure \(K\)-contraction, then \(T^{*}\) is unitarily equivalent to a restriction of \(M_{z}^{*}\) and hence
\begin{displaymath}
\sigma(T^{*})\subset\lbrace z\in\mathbb{C}^{d}; \Vert z\Vert\leq\sqrt{r}\rbrace.
\end{displaymath}
The function \(F\colon D_{r}(0)\to\mathbb{C}, F(z)=\sum_{n=0}^{\infty}a_{n+1}z^{n}\), is analytic on the open disc
$D_r(0)$ with radius $r$ and center $0$ and satisfies
\begin{displaymath}
	F(z)=\frac{k(z)-1}{z}\quad(z\in D_{r}(0)\setminus\lbrace 0\rbrace).
\end{displaymath}
For \(z\in\mathbb{B}_{d}\), let us denote by \(Z \colon H^{d}\to H,\thickspace(h_{i})_{i=1}^{d}\mapsto\sum_{i=1}^{d}z_{i} h_{i}\),  
the row operator induced by \(z\).
As a particular case of a much more general analytic spectral mapping theorem for the Taylor spectrum 
(\cite[Theorem 2.5.10]{Esc16}) we find that
\begin{displaymath}
	\sigma(ZT^{*})=\lbrace\sum_{i=1}^{d}z_{i}w_{i};\thickspace w\in\sigma(T^{*})\rbrace\subset D_{r}(0)
\end{displaymath}
for \(z\in\mathbb{B}_{d}\). Thus we can define an operator-valued function \(F_{T}\colon\mathbb{B}_{d}\to L(H)\), 
\begin{displaymath}
	F_{T}(z) = F(ZT^{*})=\sum_{n=0}^{\infty}a_{n+1}\left(\sum_{\vert\alpha\vert=n}\gamma_{\alpha}T^{*\alpha}z^{\alpha}\right).
\end{displaymath}

\begin{lem}\label{lembigf}
	For \((x_{i})_{i=1}^{d}\in H^{d}\) and $z \in \mathbb B_d$,
	\begin{displaymath}
		CF(ZT^{*})Z(x_{i})_{i=1}^{d}=(\delta M_{z}(jx_{i})_{i=1}^{d})(z).
	\end{displaymath}
\end{lem}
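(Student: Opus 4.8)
The plan is a direct term-by-term computation based on the explicit power series for $j$ from Theorem~\ref{thminter} together with the diagonal action of $\delta$. For each $i\in\{1,\dots,d\}$, Theorem~\ref{thminter} gives
\[
M_{z_i}(jx_i)=z_i\sum_{\alpha\in\mathbb{N}^d}a_{|\alpha|}\gamma_\alpha C T^{*\alpha}x_i z^\alpha
=\sum_{\alpha\in\mathbb{N}^d}a_{|\alpha|}\gamma_\alpha C T^{*\alpha}x_i z^{\alpha+e_i},
\]
a norm-convergent series in $H_K(\mathscr{D})$ since $jx_i\in H_K(\mathscr{D})$ and $M_{z_i}$ is bounded. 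Summing over $i$ and collecting terms of the same homogeneity, the homogeneous part of $M_z(jx_i)_{i=1}^d$ of degree $m\ge 1$ equals $\sum_{i=1}^d\sum_{|\alpha|=m-1}a_{m-1}\gamma_\alpha C T^{*\alpha}x_i z^{\alpha+e_i}$, and there is no constant term.

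Next I would apply $\delta$. By definition $\delta$ multiplies the homogeneous component of degree $m$ by $a_m/a_{m-1}$, so the factors $a_{m-1}$ cancel and, writing $n=m-1$,
\[
\delta M_z(jx_i)_{i=1}^d=\sum_{n=0}^\infty a_{n+1}\sum_{i=1}^d\sum_{|\alpha|=n}\gamma_\alpha C T^{*\alpha}x_i z^{\alpha+e_i}\in H_K(\mathscr{D}).
\]
Evaluating this function at the point $z\in\mathbb{B}_d$ is legitimate because point evaluation $H_K(\mathscr{D})\to\mathscr{D}$ is a bounded linear map and hence commutes with the norm-convergent series above; using $z^{\alpha+e_i}=z_i z^\alpha$, the continuity and linearity of $C$, and pulling the operator $\sum_{|\alpha|=n}\gamma_\alpha T^{*\alpha}z^\alpha$ (which does not depend on $i$) out of the sum over $i$, one obtains
\[
\bigl(\delta M_z(jx_i)_{i=1}^d\bigr)(z)=C\Bigl(\sum_{n=0}^\infty a_{n+1}\sum_{|\alpha|=n}\gamma_\alpha T^{*\alpha}z^\alpha\Bigr)\Bigl(\sum_{i=1}^d z_i x_i\Bigr).
\]
By the definition of $F_T$ (recall $F_T(z)=F(ZT^*)=\sum_{n=0}^\infty a_{n+1}\sum_{|\alpha|=n}\gamma_\alpha T^{*\alpha}z^\alpha$) and of the row operator $Z$, the right-hand side is exactly $C F_T(z)Z(x_i)_{i=1}^d=C F(ZT^*)Z(x_i)_{i=1}^d$, which is the asserted identity.

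The computation is essentially bookkeeping, and the only two points that deserve attention are the degree shift $\alpha\mapsto\alpha+e_i$ relating the power series for $jx_i$ to its expansion into homogeneous components, and the interchange of the evaluation functional $f\mapsto f(z)$ with the infinite sum defining $\delta M_z(jx_i)_{i=1}^d$; the latter is harmless since $\delta M_z(jx_i)_{i=1}^d$ is a genuine element of $H_K(\mathscr{D})$ and $f\mapsto f(z)$ is a bounded linear functional on it. Alternatively, one can avoid this point entirely by observing that, for fixed $(x_i)_{i=1}^d$, both sides of the claimed identity are analytic $\mathscr{D}$-valued functions of $z$ with explicitly known Taylor expansions, so that the identity follows by comparing the coefficient of $z^\beta$ for each $\beta\in\mathbb{N}^d$. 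I do not expect any genuine obstacle here.
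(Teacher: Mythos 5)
Your proposal is correct and follows essentially the same route as the paper: expand $jx_i$ via its defining power series, apply $\delta M_{z_i}$ term by term using the degree shift $\alpha\mapsto\alpha+e_i$ to turn the coefficients $a_n$ into $a_{n+1}$, and then use continuity of point evaluations on $H_K(\mathscr D)$ to evaluate the norm-convergent series at $z$ and recognize $CF(ZT^*)Z(x_i)_{i=1}^d$. The only cosmetic difference is that you regroup into homogeneous components before applying $\delta$, whereas the paper applies $\delta$ directly inside the double sum; the computation is the same.
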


\begin{proof}
	For \((x_{i})_{i=1}^{d}\in H^{d}\),
	\begin{align*}
		\delta M_{z}(jx_{i})_{i=1}^{d}
		&=\sum_{i=1}^{d}\delta M_{z_{i}}\sum_{n=0}^{\infty}a_{n}\left(\sum_{\vert\alpha\vert=n}\gamma_{\alpha}CT^{*\alpha}x_{i}z^{\alpha}\right)\\
		&=\sum_{i=1}^{d} \sum_{n=0}^{\infty}a_{n}\delta \left(\sum_{\vert\alpha\vert=n}\gamma_{\alpha}CT^{*\alpha}x_{i}z^{\alpha+e_{i}}\right)\\
		&=\sum_{i=1}^{d} \sum_{n=0}^{\infty} a_{n+1}\sum_{\vert\alpha\vert=n}\gamma_{\alpha}CT^{*\alpha}x_{i}z^{\alpha+e_{i}},
	\end{align*}
	where the series converge in \(H_{K}(\mathscr{D})\). Since the point evaluations are continuous on \(H_{K}(\mathscr{D})\), we obtain
	\begin{align*}
		\left(\delta M_{z}(jx_{i})_{i=1}^{d}\right)(z)
		&=\sum_{n=0}^{\infty} a_{n+1}\sum_{\vert\alpha\vert=n}\gamma_{\alpha}CT^{*\alpha}\left(\sum_{i=1}^{d}z_{i}x_{i}\right)z^{\alpha}\\
		&=CF(ZT^{*})Z(x_{i})_{i=1}^{d}
	\end{align*}
	for all \(z\in\mathbb{B}_{d}\).
\end{proof}

By Lemma \ref{lembigf} the map \(W_{T}\colon\mathbb{B}_{d}\to L(\tilde{\mathscr{D}},\mathscr{D})\),
\begin{align*}
	W_{T}(z)(x)&=-T(\oplus\Delta_{T}I_{T}^{-1})x+CF(ZT^{*})Z(\oplus I_{T}^{-1})  D_{\tilde{T}}x\\
	&=-\tilde{T}x+CF(ZT^{*})Z(\oplus I_{T}^{-1})  D_{\tilde{T}}x
\end{align*}
defines an analytic operator-valued function. 

\begin{thm}\label{thmwandinner}
Let \(T\in L(H)^{d}\) be a pure \(K\)-contraction. Then
\begin{displaymath}
W(M)=\lbrace W_{T}x;\thickspace x\in\tilde{\mathscr{D}}\rbrace
\end{displaymath}
and \(\Vert W_{T}x\Vert=\Vert x\Vert\) for \(x\in\tilde{\mathscr{D}}\).
\end{thm}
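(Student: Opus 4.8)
The plan is to deduce Theorem~\ref{thmwandinner} directly from Lemma~\ref{lemwand2} together with Lemma~\ref{lembigf}, which together already contain all the analytic content; what remains is essentially to translate the abstract description of $W(M)$ from Lemma~\ref{lemwand2} into the concrete transfer-function formula for $W_T$ and to verify that the norm identity survives this translation.

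First I would recall the characterization from Lemma~\ref{lemwand2}: a function $f\in H_K(\mathscr D)$ lies in $W(M)$ if and only if $f=-\tilde Ty+M_z^\prime(\oplus jI_T^{-1})D_{\tilde T}y$ for some $y\in\tilde{\mathscr D}$, and in that case $\Vert f\Vert^2=\Vert y\Vert^2_{\tilde H^d}$. So the map $\tilde{\mathscr D}\to W(M)$, $y\mapsto f_y$ defined by this formula is a surjective isometry onto $W(M)$. Next I would identify $f_y$ with the function $W_Ty$, i.e. show that for every $z\in\mathbb B_d$ one has $f_y(z)=W_T(z)y$. The first summand is immediate: evaluating the constant function $-\tilde Ty\in\mathscr D$ at $z$ just gives $-\tilde Ty$. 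For the second summand, write $D_{\tilde T}y=(x_i)_{i=1}^d$ (after applying $\oplus I_T^{-1}$, so that $(\oplus jI_T^{-1})D_{\tilde T}y=(jI_T^{-1}x_i)_{i=1}^d$ becomes a tuple of the form $(j\cdot)$) and apply Lemma~\ref{lembigf}, which says precisely that $(\delta M_z(jh_i)_{i=1}^d)(z)=CF(ZT^*)Z(h_i)_{i=1}^d$. Since $M_z^\prime=\delta M_z$ by definition, this yields $(M_z^\prime(\oplus jI_T^{-1})D_{\tilde T}y)(z)=CF(ZT^*)Z(\oplus I_T^{-1})D_{\tilde T}y$, which is exactly the second term in the displayed formula for $W_T(z)$. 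Hence $f_y=W_Ty$ pointwise, and therefore as elements of $H_K(\mathscr D)$.

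Combining the two observations: Lemma~\ref{lemwand2} gives $W(M)=\{f_y;\ y\in\tilde{\mathscr D}\}=\{W_Ty;\ y\in\tilde{\mathscr D}\}$, and the norm identity $\Vert W_Ty\Vert=\Vert f_y\Vert=\Vert y\Vert_{\tilde H^d}=\Vert y\Vert$ follows since $\tilde{\mathscr D}$ carries the norm inherited from $\tilde H^d$. A small point worth checking carefully is that the tuple $D_{\tilde T}y$, an element of $\tilde H^d$, is fed through $\oplus I_T^{-1}$ to land in $H^d$ before applying $j$ componentwise and before applying the row operator $Z$ and the operator $F(ZT^*)$ acting on $H$; the two displayed expressions for $W_T(z)$ in the paragraph preceding the theorem already record this bookkeeping, using $\tilde T=T\circ(\oplus I_T^{-1})\circ(\oplus\Delta_T)$ implicitly via $\tilde T_i=T_i I_T^*$ and $I_T^*=\Delta_T$, so that $-\tilde Ty=-T(\oplus\Delta_T I_T^{-1})y$ matches $-\tilde Ty$.

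The main obstacle — really the only thing requiring attention — is the convergence and interchange issues hidden in the evaluation $(M_z^\prime(\cdots))(z)$: one must be sure that applying the point evaluation at $z$ commutes with the series manipulations, but this is exactly what Lemma~\ref{lembigf} was set up to handle, its proof invoking continuity of point evaluations on $H_K(\mathscr D)$ and the fact that $\sigma(ZT^*)\subset D_r(0)$ so that $F(ZT^*)$ is a well-defined bounded operator given by a norm-convergent series. So once Lemmas~\ref{lemwand2} and~\ref{lembigf} are in hand, the proof of Theorem~\ref{thmwandinner} is a one-paragraph assembly: quote Lemma~\ref{lemwand2} for the description and the norm, then quote Lemma~\ref{lembigf} to rewrite $f_y(z)$ as $W_T(z)y$, and conclude.
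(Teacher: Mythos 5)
Your proposal is correct and follows exactly the paper's own argument: the paper also proves Theorem \ref{thmwandinner} by using Lemma \ref{lembigf} to identify $W_Tx$ with $-\tilde{T}x+M_{z}^{\prime}(\oplus jI_{T}^{-1})D_{\tilde{T}}x$ and then invoking Lemma \ref{lemwand2} for both the description of $W(M)$ and the norm identity.
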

\begin{proof}
For \(x\in\tilde{\mathscr{D}}\), Lemma \ref{lembigf} implies that
\begin{align*}
	W_{T}x&=-\tilde{T}x+\delta M_{z}(\oplus jI_{T}^{-1})D_{\tilde{T}}x\\
	&=-\tilde{T}x+M_{z}^{\prime}(\oplus jI_{T}^{-1})D_{\tilde{T}}x.
\end{align*}
Thus the assertion follows from Lemma \ref{lemwand2}.
\end{proof}

Since \(W(M)\) is a wandering subspace for \(M_{z}\), the map
\(W_{T}\colon\mathbb{B}_{d}\to L(\tilde{\mathscr{D}},\mathscr{D})\) is an operator-valued 
analytic function such that $\tilde{\mathscr D} \rightarrow H_{K}(\mathscr{D}),$ $x \mapsto W_T x,$ 
is an isometry and
\begin{displaymath}
	W_{T}(\mathscr{\tilde{D}})\perp M_{z}^{\alpha}\left(W_{T}(\tilde{\mathscr{D}})\right)\text{ for all }\alpha\in\mathbb{N}^{d}\setminus\lbrace 0\rbrace.
\end{displaymath}
Thus $W_T: \mathbb B_d \to L(\tilde{\mathscr D},\mathscr D)$ is a $K$-inner function with
$W_T(\tilde{\mathscr D}) = W(M)$. In the case that \(M_{z}\in L(H_K)^{d}\) is a row contraction one can show that each 
\(K\)-inner function \(W\colon\mathbb{B}_{d}\to L(\tilde{\mathscr{ E}},\mathscr{E})\) defines a contractive multiplier
\begin{displaymath}
	M_{W}\colon H_{d}^{2}(\mathscr{E})\to H_{K},\thickspace f\to Wf
\end{displaymath}
from the \(\mathscr{E}\)-valued Drury-Arveson space \(H_{d}^{2}(\mathscr{E})\) to  \(H_{K}(\tilde{\mathscr{E}})\) 
(\cite[Theorem 6.2]{Bha17}).

\section{\(K\)-inner functions}\label{sectionkinnerfunctions}

In the previous section we saw that the \(K\)-inner function 
\(W_{T}\colon\mathbb{B}_{d}\to L(\tilde{\mathscr{D}},\mathscr{D})\) associated with a pure 
\(K\)-contraction \(T \in L(H)^{d}\) has the form
\begin{displaymath}
	W_{T}(z) = D + CF(ZT^{*})ZB,
\end{displaymath} 
where \(C=\left(\frac{1}{K}(T)\right)^{\frac{1}{2}}\in L(H,\mathscr{D})\), \(D=-\tilde{T}\in L(\tilde{\mathscr{D}},\mathscr{D})\) 
and \(B=(\oplus I_{T}^{-1})D_{\tilde{T}}\in L(\tilde{\mathscr{D}},H^{d})\). An elementary calculation using the definitions and the intertwining relation \(\tilde{T}D_{\tilde{T}}=D_{\tilde{T}^{*}}\tilde{T}\) shows that the operators \(T\), \(B\), \(C\), \(D\) 
satisfy the conditions
\begin{align*}
	&\text{(K1) }C^{*}C=\frac{1}{K}(T),\\
	&\text{(K2) }D^{*}C+B^{*}(\oplus\Delta_{T})T^{*}=0,\\
	&\text{(K3) }D^{*}D+B^{*}(\oplus\Delta_{T})B=1_{\tilde{\mathscr{D}}},\\
	&\text{(K4) }\operatorname{Im}((\oplus j)B)\subset M_{z}^{*}H_{K}(\mathscr{D}).
\end{align*}
If \(\mathscr{E}\) is a Hilbert space and \(C\in L(H,\mathscr{E})\) is any operator with \(C^{*}C=\frac{1}{K}(T)\), then exactly as in the proof of Proposition 2.6 from \cite{Sch18} it follows that 
\begin{displaymath}
	j_{C}\colon H\to H_{K}(\mathscr{E}),\thickspace j_{C}(x) = 
	\sum_{\alpha\in\mathbb{N}^{d}}a_{\vert\alpha\vert}\gamma_{\alpha}(CT^{*\alpha}x)z^{\alpha}
\end{displaymath}
is a well defined isometry that intertwines the tuples \(T^{*}\in L(H)^{d}\) and \(M_{z}^{*}\in L(H_{K}(\mathscr{E}))\) componentwise.
As in the section following Theorem \ref{thminter} one can show that
\[
j_C^* f = \sum_{\alpha \in \mathbb N^d}T^{\alpha}C^*f_{\alpha}
\]
for $f = \sum_{\alpha \in \mathbb N^d} f_{\alpha} z^{\alpha} \in H_K(\mathscr E)$. Hence we find that
\begin{align*}
		j_C^* \Delta j_C x
		&=j_C^* \Delta \sum_{\alpha\in\mathbb{N}^{d}}a_{\vert\alpha\vert}\gamma_{\alpha}(CT^{*\alpha}x)z^{\alpha}\\
		&=j_C^* \sum_{\alpha\in\mathbb{N}^{d}}a_{|\alpha | + 1}\gamma_{\alpha}(CT^{*\alpha}x)z^{\alpha}\\
		&=\sum_{\alpha\in\mathbb{N}^{d}}a_{|\alpha | + 1}\gamma_{\alpha} (T^{\alpha} C^* C T^{*\alpha}x)\\
		&=\sum_{\alpha\in\mathbb{N}^{d}}a_{|\alpha | + 1}\gamma_{\alpha} (T^{\alpha} \frac{1}{K}(T)T^{*\alpha}x)
	\end{align*}
for all $x \in H$. By performing the same chain of calculations with $j_C$ replaced by the canonical $K$-dilation
$j$ of $T$ from Theorem \ref{thminter} we obtain that
\[
j_C^* \Delta j_C = j^* \Delta j = \Delta_T.
\]
\\
Our next aim is to show that any matrix operator
\begin{displaymath}
	\left(
	\begin{array}{c|c}
		T^*&B\\
		\hline
		C&D
	\end{array}
	\right)\colon H\oplus\mathscr{E}_{*}\to H^{d}\oplus\mathscr{E},
\end{displaymath}
where \(T\) is a pure \(K\)-contraction and \(T\), \(B\), \(C\), \(D\) satisfy the conditions (K1)-(K3) with \((\tilde{\mathscr{D}},\mathscr{D})\)
replaced by \((\mathscr{E}_{*},\mathscr{E})\) and 
\begin{displaymath}
	\text{(K4) }\operatorname{Im}((\oplus j_{C})B)\subset M_{z}^{*}H_{K}(\mathscr{E})
\end{displaymath}
gives rise to a \(K\)-inner function \(W\colon\mathbb{B}_{d}\to L(\mathscr{E}_{*},\mathscr{E})\) defined as
\begin{displaymath}
	W(z) = D + CF(ZT^{*})ZB
\end{displaymath} 
and that, conversely, under a natural condition on the kernel \(K\) each \(K\)-inner function is of this form.

\begin{thm}\label{sufficient}
Let \(W\colon\mathbb{B}_{d}\to L(\mathscr{E}_{*},\mathscr{E})\) be an operator-valued function between Hilbert spaces 
\(\mathscr{E}_{*}\) and \(\mathscr{E}\) such that
\begin{displaymath}
	W(z) = D + CF(ZT^{*})ZB  \quad (z \in \mathbb B_d),
\end{displaymath}
where \(T\in L(H)^{d}\) is a pure \(K\)-contraction and the matrix operator
\begin{displaymath}
\left(
\begin{array}{c|c}
 T^{*}&B\\
\hline
C&D
\end{array}
\right)\colon H\oplus\mathscr{E}_{*}\to H^{d}\oplus\mathscr{E}
\end{displaymath}
satisfies the condition (K1)-(K4). Then \(W\) is a \(K\)-inner function.
\end{thm}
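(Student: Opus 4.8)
The plan is to realise $W$, up to identifying $\mathscr E_*$ isometrically with a subspace of $H_K(\mathscr E)$, inside the wandering subspace attached to a suitable invariant subspace, essentially by rerunning the machinery of Section \ref{sectionwanderingsubspaces} with the canonical $K$-dilation $j$ replaced by the isometry $j_C$ associated with the given operator $C$. First I would observe that condition (K1) makes
\[
j_C\colon H\to H_K(\mathscr E),\qquad j_C(x)=\sum_{\alpha\in\mathbb N^d}a_{|\alpha|}\gamma_\alpha(CT^{*\alpha}x)z^\alpha,
\]
a well-defined isometry intertwining $T^*$ with $M_z^*$ componentwise, and that --- as recorded just before the theorem --- one has $j_C^*\bigl(\sum_\alpha f_\alpha z^\alpha\bigr)=\sum_\alpha T^\alpha C^*f_\alpha$ and $j_C^*\Delta j_C=\Delta_T$. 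I would then put $M'=(\operatorname{Im}j_C)^{\perp}\in\operatorname{Lat}(M_z,H_K(\mathscr E))$ and $W(M')=M'\ominus\sum_{i=1}^d M_{z_i}M'$. The point is that Lemma \ref{lembigf}, Theorem \ref{thmwand1} and Lemma \ref{normwand} use nothing about $j$ beyond these three properties (together with Lemma \ref{lemsdelta} and the remark following it, which involve $H_K(\mathscr E)$ alone), so their statements hold verbatim with $j$, $\mathscr D$, $C$ replaced by $j_C$, $\mathscr E$, $C^*$. Concretely: $f\in W(M')$ if and only if $f=f_0+M_z'(j_Cx_i)_{i=1}^d$ for some $f_0\in\mathscr E$ and $x_1,\dots,x_d\in H$ with $(j_Cx_i)_{i=1}^d\in M_z^*H_K(\mathscr E)$ and $C^*f_0+T(\Delta_Tx_i)_{i=1}^d=0$, and in that case $\|f\|^2=\|f_0\|^2+\sum_{i=1}^d\langle\Delta_Tx_i,x_i\rangle$.

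Next I would check that $x\mapsto Wx$ lands in $W(M')$. For $x\in\mathscr E_*$ write $Bx=((Bx)_i)_{i=1}^d\in H^d$; the $j_C$-version of Lemma \ref{lembigf} gives $CF(ZT^*)Z(Bx)=\bigl(\delta M_z(\oplus j_C)(Bx)\bigr)(z)$ for $z\in\mathbb B_d$, so the analytic function $z\mapsto W(z)x=Dx+CF(ZT^*)ZBx$ coincides, as an $\mathscr E$-valued function on $\mathbb B_d$, with the vector $Dx+M_z'(\oplus j_C)(Bx)$ of $H_K(\mathscr E)$; in particular the map $\mathscr E_*\to H_K(\mathscr E)$, $x\mapsto Wx$, is well-defined. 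Comparing with the description above and taking $f_0=Dx\in\mathscr E$, $(x_i)_{i=1}^d=Bx$: condition (K4) says exactly that $(\oplus j_C)(Bx)\in M_z^*H_K(\mathscr E)$, and taking adjoints in (K2) (and using that $\Delta_T$ is selfadjoint) gives $C^*Dx+T(\oplus\Delta_T)Bx=0$, i.e.\ the required compatibility relation. Hence $Wx\in W(M')$ for all $x\in\mathscr E_*$, that is, $W\mathscr E_*\subseteq W(M')$.

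It then remains to extract $K$-innerness. Since $W(M')$ is a wandering subspace for $M_z$ --- a general fact about $W_S(L)$ with $L$ invariant, already recalled in the text --- so is any subspace of it, whence $W\mathscr E_*\perp M_z^\alpha W\mathscr E_*$ for every $\alpha\in\mathbb N^d\setminus\{0\}$. Applying the norm identity above to $f=Wx$ yields $\|Wx\|^2=\|Dx\|^2+\sum_{i=1}^d\langle\Delta_T(Bx)_i,(Bx)_i\rangle=\langle(D^*D+B^*(\oplus\Delta_T)B)x,x\rangle$, which equals $\|x\|^2$ by (K3); so $x\mapsto Wx$ is isometric. Together with the analyticity of $W$ (immediate from the power-series form of $z\mapsto CF(ZT^*)Z$, as in Section \ref{sectionwanderingsubspaces}) this exhibits $W$ as a $K$-inner function.

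The only genuinely delicate point is the first step: one has to be sure that the chain Lemma \ref{lemsdelta} $\Rightarrow$ Theorem \ref{thmwand1} $\Rightarrow$ Lemma \ref{normwand} uses the canonical dilation $j$ solely through its being an isometric intertwiner of the displayed form (so that it may be reused for $j_C$), and that $\Delta_T=j^*\Delta j=j_C^*\Delta j_C$, so that the operator $\Delta_T$ occurring in (K2)--(K3) is indeed the one produced by $j_C$. Everything after that is bookkeeping: (K1) gives the isometry $j_C$, (K2) is the compatibility of $(Dx,Bx)$, (K4) the membership in $M_z^*H_K(\mathscr E)$ that allows Theorem \ref{thmwand1} to be invoked, and (K3) the normalisation making $x\mapsto Wx$ isometric.
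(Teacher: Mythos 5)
Your proof is correct and follows essentially the same route as the paper: both work with the isometry \(j_C\), the identity \(j_C^*\Delta j_C=\Delta_T\), and Lemmas \ref{lemsdelta} and \ref{lembigf} to place \(W\mathscr E_*\) inside the wandering subspace of \((\operatorname{Im}j_C)^{\perp}\) and to obtain the isometry from (K3). The only difference is organisational: the paper verifies the defining conditions \(j_C^*(Wx)=0\) and \((1-j_Cj_C^*)M_{z_i}^*(Wx)=0\) by direct computation, whereas you invoke the \(j_C\)-versions of Theorem \ref{thmwand1} and Lemma \ref{normwand}, which, as you correctly note, carry over verbatim since their proofs use \(j\) only as an isometric intertwiner of the stated form.
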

\begin{proof}
The space \(M=H_{K}(\mathscr{E})\ominus\operatorname{Im}j_{C}\subset H_{K}(\mathscr{E})\) is a closed \(M_{z}\)-invariant subspace. Let \(x\in\mathscr{E}_{*}\) be a fixed vector. By condition (K4) there is a function \(f\in H_{K}(\mathscr{E})\) with \((\oplus j_{C})Bx=M_{z}^{*}f\). Exactly as in the proof of Lemma \ref{lembigf} it follows that
\begin{displaymath}
	CF(ZT^{*})ZBx  =\delta M_{z}(\oplus j_{C})Bx(z) = \delta  M_{z}M_{z}^{*}f(z)
\end{displaymath}	
for all $z \in \mathbb B_d$.
Since \(\delta (M_{z}M_{z}^{*})=P_{\operatorname{Im}M_{z}}\) is an orthogonal projection and since 
\(\delta M_{z}=M_{z}(\oplus\Delta)\), we find that
\begin{align*}
	\Vert Wx\Vert_{H_{K}(\mathscr{E})}^{2}-\Vert Dx\Vert^{2}
	&=\langle \delta M_{z}M_{z}^{*}f,f\rangle_{H_{K}(\mathscr{E})}\\
	&=\langle \oplus(j_{C}^{*}\Delta j_{C})Bx,Bx\rangle_{H^{d}}\\
	&=\langle (\oplus\Delta_{T})Bx,Bx\rangle_{H^{d}}\\
	&=\langle (1_{\mathscr{E}_{*}}-D^{*}D)x,x\rangle\\
	&=\Vert x\Vert^{2}-\Vert Dx\Vert^{2}.
\end{align*}
Hence the map \(\mathscr{E}_{*}\to H_{K}(\mathscr{E}),\thickspace x\mapsto Wx\), is a well-defined isometry. Using the second part of Lemma \ref{lemsdelta} we obtain
\begin{displaymath}
	M_{z}^{*}(Wx)=M_{z}^{*}\delta M_{z}M_{z}^{*}f=M_{z}^{*}f=(\oplus j_{C})Bx
\end{displaymath} 	
and hence that \(P_{M}M_{z_{i}}^{*}(Wx)=(1_{H_{K}(\mathscr{E})}-j_{C}j_{C}^{*})M_{z_{i}}^{*}(Wx)=0\) for \(i=1,\cdots,d\). 
To see that \(W\mathscr{E}_{*}\subset M\) note that with \(x\) and \(f\) as above
\begin{align*}
	j_{C}^{*}(Wx)&=C^{*}Dx+j_{C}^{*}(\delta M_{z}M_{z}^{*}f)\\
	&=C^{*}Dx+j_{C}^{*}(M_{z}(\oplus\Delta)M_{z}^{*}f)\\
	&=C^{*}Dx+T(\oplus j_{C}^{*}\Delta j_{C})Bx\\
	&=C^{*}Dx+T(\oplus\Delta_{T})Bx\\
	&=0.
\end{align*}
Thus we have shown that \(W\mathscr{E}_{*}\subset M\ominus\sum_{i=1}^{d}z_{i}M\) which implies that
\begin{displaymath}
	W\mathscr{E}_{*}\perp z^{\alpha}(W\mathscr{E}_{*})
\end{displaymath}
for all \(\alpha \in\mathbb{N}^{d}\setminus\lbrace 0 \rbrace\).
\end{proof}

To prove that conversely each $K$-inner function $W: \mathbb B_d \rightarrow L(\mathscr E_*,\mathscr E)$
has the form described in Theorem \ref{sufficient} we make the additional assumption that the multiplication
tuple $M_z \in L(H_K)^d$ is a $K$-contraction. This hypothesis is satisfied, for instance, if $H_K$ is
a complete Nevanlinna-Pick space such as the Drury-Arveson space or the Dirichlet space or if $K$ is a power
\[
K_{\nu}: \mathbb B_d \times \mathbb B_d \rightarrow, K_{\nu}(z,w) = \frac{1}{(1 - \langle z,w \rangle)^{\nu}} \quad (\nu \in (0,\infty))
\]
of the Drury-Arveson kernel (see the discussion following Theorem \ref{thmvonneumannmz}).
In the proof we shall use a uniqueness result for minimal $K$-dilations whose proof we postpone to Section \ref{sectionminimalkdilations}.

\begin{thm}
	 Let \(M_{z}\in L(H_K)^{d}\) be a \(K\)-contraction. 
	If  \(W\colon\mathbb{B}_{d}\to L(\mathscr{E}_{*},\mathscr{E})\) is a \(K\)-inner function, 
	then there exist a pure \(K\)-contraction \(T \in L(H)^{d}\) and a matrix operator
	\begin{displaymath}
		\left(\begin{array}{c|c}
			T^{*} & B \\ 
			\hline
			C & D
		\end{array}\right)\in L(H\oplus\mathscr{E}_{*}, H^{d}\oplus\mathscr{E})
	\end{displaymath}
	satisfying the conditions (K1)-(K4) such that
	\begin{displaymath}
	W(z) = D + CF(ZT^{*})ZB \quad (z \in \mathbb{B}_{d}).
	\end{displaymath}
\end{thm}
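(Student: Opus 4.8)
The plan is to reverse the construction from Section 2: given a $K$-inner function $W\colon\mathbb B_d\to L(\mathscr E_*,\mathscr E)$, produce the invariant subspace $M = \overline{\operatorname{span}}\{M_z^\alpha(W\mathscr E_*);\alpha\in\mathbb N^d\}\subset H_K(\mathscr E)$ so that $W\mathscr E_* = W(M)$ is the generating wandering subspace. Since $M_z\in L(H_K)^d$ is a $K$-contraction, its restriction $T_M = P_M M_z|_{M}$ (equivalently, one works with $M_z^*|_M$ on the model side) should again be a pure $K$-contraction, and $j\colon M\hookrightarrow H_K(\mathscr E)$ plays the role of the inclusion. The first step is therefore to verify that $M$, equipped with $M_z|_M$, is a pure $K$-contraction; here one has to check that the defining SOT-series for $\frac1K(M_z|_M)$ converges and is positive, which will follow from the corresponding property for $M_z$ on $H_K$ together with invariance, and that pureness is inherited.

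Next I would identify the data $(T,B,C,D)$. Set $H$ to be the dilation space produced by Theorem \ref{thminter} applied to the pure $K$-contraction on $M$ — or more directly, observe that $M\cong H_K(\mathscr W)$ for $\mathscr W = W(M) = W\mathscr E_*$ is \emph{not} generally true, so instead one runs the argument on $\operatorname{Im}j$. Concretely: $W$ being $K$-inner means $x\mapsto Wx$ is an isometry $\mathscr E_*\to H_K(\mathscr E)$ with range in $W(M)$, and we want a realization. The cleaner route is to use the \emph{uniqueness theorem for minimal $K$-dilations} (the result postponed to Section \ref{sectionminimalkdilations}): the inclusion $M\hookrightarrow H_K(\mathscr E)$ is a $K$-dilation of $T := (M_z|_M)^*$ regarded appropriately, and by minimality it must be unitarily equivalent to the canonical dilation $j$ of Theorem \ref{thminter} into $H_K(\mathscr D)$ where $\mathscr D = \overline{\operatorname{Im}C}$ and $C = \frac1K(T)^{1/2}$. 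Under this identification the wandering subspace $W\mathscr E_*\subset H_K(\mathscr E)$ corresponds to $W(M)\subset H_K(\mathscr D)$, which by Theorem \ref{thmwandinner} is exactly $\{W_T x; x\in\tilde{\mathscr D}\}$ with $\Vert W_T x\Vert = \Vert x\Vert$. Composing the given isometry $\mathscr E_*\to W\mathscr E_*$, the unitary equivalence, and the isometry $\tilde{\mathscr D}\to W(M)$, $x\mapsto W_T x$, in reverse yields a unitary $\mathscr E_*\to\tilde{\mathscr D}$ intertwining $W$ with $W_T$. Transporting the operators $B,C,D$ of the realization $W_T(z) = -\tilde T + CF(ZT^*)Z(\oplus I_T^{-1})D_{\tilde T}$ through this unitary and through $j_C$ gives operators satisfying (K1)--(K4), as was checked in the paragraph before Theorem \ref{sufficient}.

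The main obstacle, and the step I would spend the most care on, is the reduction to the canonical dilation via minimality — in two respects. First, one must show that the $K$-inner hypothesis forces $M$ (the smallest $M_z$-invariant subspace containing $W\mathscr E_*$) to have the property that $M^\perp = \operatorname{Im}j$ for some $K$-dilation $j$ of a pure $K$-contraction; equivalently, that $T^* := M_z^*|_{M^\perp}$ is a pure $K$-contraction and that $M^\perp$ with this tuple sits inside $H_K(\mathscr E)$ as a minimal $K$-dilation. Purity here is the delicate point: it is where the hypothesis that $M_z$ on $H_K$ is a $K$-contraction (so in particular the defect series behaves well) gets used, and one likely needs the explicit action of $\frac1K(M_z)$ together with the fact that $W\mathscr E_*$ generates $M$ to pin down that the complementary piece is genuinely a \emph{pure} $K$-contraction with defect space matching the multiplicity. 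Second, the uniqueness theorem of Section \ref{sectionminimalkdilations} must be applicable in the operator-valued (non-scalar coefficient) setting and must yield the unitary equivalence in the \emph{componentwise-intertwining} sense strong enough to match reproducing kernels; this is exactly the Arveson-type uniqueness statement the authors announce they will prove, so I would invoke it as a black box and concentrate on verifying its hypotheses. Once the dilation is normalized, extracting $(T,B,C,D)$ and checking (K1)--(K4) is the same elementary computation already displayed in the excerpt, now read in reverse.
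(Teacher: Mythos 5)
There is a genuine gap in your argument: you assume that, after identifying $H=\mathscr S^{\perp}$ (with $\mathscr S=\bigvee_{\alpha}M_z^{\alpha}(W\mathscr E_*)$) with its canonical model, the wandering subspace $W\mathscr E_*$ ``corresponds to $W(M)\subset H_K(\mathscr D)$'' and hence that there is a \emph{unitary} $\mathscr E_*\to\tilde{\mathscr D}$ intertwining $W$ with $W_T$. This fails in general, because the inclusion $H\hookrightarrow H_K(\mathscr E)$ need not be a \emph{minimal} $K$-dilation of the compression $T=P_HM_z|_H$: the invariant subspace $\mathscr S$ may contain a nontrivial $M_z$-reducing subspace. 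A concrete counterexample is $W=1_{\mathscr E}$ (constant identity), where $\mathscr S=H_K(\mathscr E)$, $H=\{0\}$, $\tilde{\mathscr D}=\{0\}$, yet $\mathscr E_*=\mathscr E\neq\{0\}$; no unitary $\mathscr E_*\to\tilde{\mathscr D}$ exists and $W$ is not of the form $UW_T(\cdot)U_2$. The paper handles this by first passing to the smallest reducing subspace $\mathscr R\supset H$, which by Lemma \ref{lemmzreducing} equals $H_K(\mathscr R\cap\mathscr E)$; only the corestriction $i\colon H\to H_K(\mathscr R\cap\mathscr E)$ is a minimal $K$-dilation, to which Corollaries \ref{corminimal} and \ref{corkdilation} apply. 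The complementary reducing piece $H_K(\hat{\mathscr E})$ with $\hat{\mathscr E}=\mathscr E\ominus(\mathscr R\cap\mathscr E)$ sits inside $\mathscr S$ and contributes an extra direct summand to the wandering subspace, giving $\mathscr W=\hat{\mathscr E}\oplus(1_{H_K}\otimes U)W(M)$. Consequently one only gets an \emph{isometric column} $(U_1,U_2)\colon\mathscr E_*\to\hat{\mathscr E}\oplus\tilde{\mathscr D}$, and the realization must be corrected to $\tilde D=U_1+UDU_2$, $\tilde B=BU_2$, $\tilde C=UC$; your version would miss the constant term $U_1$ and could not reproduce even the example above.

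Apart from this, your overall strategy is the paper's: compress $M_z$ to $\mathscr S^{\perp}$ (you briefly misstate this as restricting to $\mathscr S$ before correcting yourself --- note a $K$-dilation must intertwine $T^*$ with $M_z^*$, so the relevant subspace is the co-invariant one), invoke purity of the compression from the hypothesis that $M_z$ is a $K$-contraction, apply the Arveson-type uniqueness of minimal $K$-dilations, and transport the data $(T,B,C,D)$ of $W_T$ back through the resulting unitaries while checking (K1)--(K4). Those steps are sound; the missing ingredient is precisely the reducing-subspace decomposition of $\mathscr S$ and the resulting orthogonal splitting of the wandering subspace.
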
\label{thmkinnerunique}
\begin{proof}
	Since \(W\) is \(K\)-inner, the space
	\begin{displaymath}
		\mathscr{W}=W\mathscr{E}_{*}\subset H_{K}(\mathscr{E})
	\end{displaymath}
	is a generating wandering subspace for \(M_{z}\in L(H_{K}(\mathscr{E}))^{d}\) restricted to
	\begin{displaymath}
		\mathscr{S}=\bigvee_{\alpha\in\mathbb{N}^{d}}M_{z}^{\alpha}\mathscr{W}\subset H_{K}(\mathscr{E}).
	\end{displaymath}
	The compression \(T=P_{H}M_{z}\vert_{H}\) of \(M_{z} \in L(H_{K}(\mathscr{E}))^{d}\) 
	to the \(M_{z}^{*}\)-invariant subspace \(H=H_{K}(\mathscr{E})\ominus \mathscr{S}\) is easily seen to be 
	a pure \(K\)-contraction (\cite[Proposition 2.12 and Lemma 2.21]{Sch18}). 
	Let \(\mathscr{R}\subset H_{K}(\mathscr{E})\) be the smallest reducing subspace 
	for \(M_{z}\in L(H_{K}(\mathscr{E}))^{d}\) that contains \(H\). By Lemma \ref{lemmzreducing}
	\begin{displaymath}
	\mathscr{R}=\bigvee_{\alpha\in\mathbb{N}^{d}}z^{\alpha}(\mathscr{R}\cap\mathscr{E})=H_{K}(\mathscr R  \cap\mathscr{E}).
	\end{displaymath}
	Thus the inclusion map \(i\colon H\to H_{K}(\mathscr{R}\cap\mathscr{E})\) is a minimal 
	\(K\)-dilation for \(T\). Let \(j\colon H\to H_{K}(\mathscr{D}) \) be the \(K\)-dilation 
	of the pure \(K\)-contraction \(T\in L(H)^{d}\) defined in Theorem \ref{thminter}. 
	Since also \(j\) is a minimal \(K\)-dilation for \(T\) (Corollary \ref{corminimal}), 
	by Corollary \ref{corkdilation} there is a unitary operator 
	\(U\colon\mathscr{D}\to\mathscr{R}\cap\mathscr{E}\) such that
	\begin{displaymath}
		i = (1_{H_{K}}\otimes U)j.
	\end{displaymath}
	Define \(\hat{\mathscr{E}}=\mathscr{E}\ominus(\mathscr{R}\cap\mathscr{E})\). By construction
	\begin{displaymath}
		H_{K}(\hat{\mathscr{E}})=H_{K}(\mathscr{E})\ominus H_{K}(\mathscr{R}\cap\mathscr{E}) = 
		H_{K}(\mathscr{E})\ominus \mathscr R \subset \mathscr{S}
	\end{displaymath}
	is the largest reducing subspace for \(M_{z}\in L(H_{K}(\mathscr{E}))^{d}\) contained in \(\mathscr{S}\). In particular, the space \(\mathscr{S}\) admits the orthogonal decomposition
	\begin{displaymath}
		\mathscr{S}=H_{K}(\hat{\mathscr{E}})\oplus(\mathscr{S}\cap H_{K}(\hat{\mathscr{E}})^{\perp})=H_{K}(\hat{\mathscr{E}})\oplus(H_{K}(\mathscr{R}\cap\mathscr{E})\ominus \mathscr{S}^{\perp}).
	\end{displaymath}
	We complete the proof by comparing the given \(K\)-inner function \(W\colon\mathbb{B}_{d}\to L(\mathscr{E}_{*},\mathscr{E})\) 
	with the \(K\)-inner function \(W_{T}\colon\mathbb{B}_{d}\to L(\tilde{\mathscr{D}},\mathscr{D})\) associated with the 
	pure \(K\)-contraction \(T\in L(H)^{d}\). For this purpose, let us define the \(M_{z}\)-invariant subspace
	\begin{displaymath}
		M = H_{K}(\mathscr{D})\ominus\operatorname{Im}j
	\end{displaymath}
	and its wandering subspace
	\begin{displaymath}
		W(M) = M\ominus\left(\sum_{i=1}^{d}z_{i}M\right)
	\end{displaymath}
	as in Section \ref{sectionwanderingsubspaces}. Using the identity \(i=(1_{H_K}\otimes U)j\) one obtains that
	\begin{displaymath}
		1_{H_{K}}\otimes U\colon M\to H_{K}(\mathscr{R}\cap\mathscr{E})\ominus \mathscr{S}^{\perp} = 
		H_K(\mathscr{R}\cap\mathscr{E})\cap\mathscr{S}
	\end{displaymath}
	defines a unitary operator that intertwines the restrictions of \(M_{z}\) to both sides componentwise. Consequently we obtain the orthogonal decomposition
	\begin{align*}
		\mathscr{W}&=W_{M_{z}}(\mathscr{S})
		=W_{M_{z}}(H_{K}(\hat{\mathscr{E}}))\oplus W_{M_{z}}(H_{K}(\mathscr{R}\cap\mathscr{E})\cap\mathscr{S})\\
		&=\hat{\mathscr{E}}\oplus(1_{H_{K}}\otimes U)W(M).
	\end{align*}
	Let \(W_{T}\colon\mathbb{B}_{d}\to L(\tilde{\mathscr{D}},\mathscr{D})\) be the \(K\)-inner function, associated with the pure \(K\)-contraction \(T\in L(H)^{d}\). Then there is a matrix operator
	\begin{displaymath}
		\left(
		\begin{array}{c|c}
			T^{*}&B\\
			\hline
			C&D
			\end{array}
		\right)\in L(H\oplus\tilde{\mathscr{D}},H^{d}\oplus\mathscr{D})
	\end{displaymath}
 	such that
	\begin{displaymath}
		W_{T}(z)=D+CF(ZT^*)ZB \quad \; (z \in \mathbb{B}_{d})
	\end{displaymath} 
	and \(W(M)=\lbrace W_{T}x;\thickspace x\in\tilde{\mathscr{D}}\rbrace\) (see the beginning of Section \ref{sectionkinnerfunctions} and Theorem \ref{thmwandinner}). Let us denote by
	\begin{displaymath}
		P_{1}\colon\mathscr{W}\to\hat{\mathscr{E}}\text{ and } P_{2}\colon\mathscr{W}\to(1_{H_{K}}\otimes U)W(M)
	\end{displaymath}
	 the orthogonal projections. The \(K\)-inner functions \(W\colon\mathbb{B}_{d}\to L(\mathscr{E}_{*},\mathscr{E})\) and \(W_{T}\colon\mathbb{B}_{d}\to L(\tilde{\mathscr{D}},\mathscr{D})\) induce unitary operators 
	\begin{displaymath}
		\mathscr{E}_{*}\to\mathscr{W},\thickspace x\mapsto Wx
	\end{displaymath}
	and 
	\begin{displaymath}
		\tilde{\mathscr{D}}\to W(M)\thickspace x\mapsto W_{T}x.
	\end{displaymath}
	We define surjective bounded linear operators by
	\begin{displaymath}
	U_{1}\colon\mathscr{E}_{*}\to\hat{\mathscr{E}},\thickspace U_{1}x=P_{1}Wx
	\end{displaymath}
	and
	\begin{displaymath}
	U_{2}\colon\mathscr{E}_{*}\to\tilde{\mathscr{D}},\thickspace U_{2}x=\tilde{x}\text{ if }(1_{H_{K}}\otimes U)W_{T}x=P_{2}Wx.
	\end{displaymath}
	By construction the column operator
	\begin{displaymath}
		(U_{1},U_{2})\colon\mathscr{E}_{*}\to\hat{\mathscr{E}}\oplus\tilde{\mathscr{D}}
	\end{displaymath}
	defines an isometry such that
	\begin{displaymath}
		W(z)x=U_{1}x +UW_{T}(z)U_{2}x=(U_{1}+UDU_{2})x+(UC)F(ZT^*)Z(BU_{2})x
	\end{displaymath}
	holds for \(z\in\mathbb{B}_{d}\) and \(x\in\mathscr{E}_{*}\). To complete the proof we show that the operators 
	\begin{align*}
		&T\in L(H^{d},H),\thickspace\tilde{B}=BU_{2}\in L(\mathscr{E}_{*},H^{d}),\thickspace\tilde{C}=UC\in L(H,\mathscr{E})\\
		&\text{and }
		\tilde{D}=(U_{1}+UDU_{2})\in L(\mathscr{E}_{*},\mathscr{E})
	\end{align*}
	satisfy the conditions (K1)-(K4). To see this note that
	\begin{displaymath}
		\tilde{C}^{*}\tilde{C}=C^{*}U^{*}UC=C^{*}C=\frac{1}{K}(T)
	\end{displaymath}
	and
	\begin{align*}
		\tilde{D}^{*}\tilde{C}&=U_{2}^{*}D^{*}U^{*}UC
		=U_{2}^{*}D^{*}C\\
		&=-U_{2}^{*}B^{*}\left(\oplus\Delta_{T}\right)T^{*}
		=-\tilde{B}^{*}\left(\oplus\Delta_{T}\right)T^{*}.
	\end{align*}
	To verify condition (K3) note that \(\tilde{\mathscr{D}}\) acts as the column operator
	\begin{displaymath}
		\tilde{D}=(U_{1},UDU_{2})\colon\mathscr{E}_{*}\to\mathscr{E}=\hat{\mathscr{E}}\oplus(R\cap\mathscr{E}).
	\end{displaymath}
	Thus we obtain that
	\begin{align*}
		\tilde{D}^{*}\tilde{D}&=U_{1}U_{1}+U_{2}^{*}D^{*}U^{*}DU_{2}\\
		&=U_{1}^{*}U_{1}+U_{2}^{*}U_{2}-U_{2}^{*}B^{*}\left(\oplus\Delta_{T}\right)BU_{2}\\
		&=1_{\mathscr{E}_{*}}-\tilde{B}^{*}\left(\oplus\Delta_{T}\right)\tilde{B}.
	\end{align*}
	Since \(j_{\tilde{C}}=Uj_{C}\), it follows that
	\begin{displaymath}
		\left(\oplus j_{\tilde{C}}\right)\tilde{B}x=(\oplus U)(\oplus j_{C})B(U_{2}x)\in M_{z}^{*}H_{K}(\mathscr{E})
	\end{displaymath}
	holds for all \(x\in\mathscr{E}_{*}\). Thus the \(K\)-inner function \(W\colon\mathbb{B}_{d}\to L(\mathscr{E}_{*},\mathscr{E})\) 
	admits a matrix representation of the claimed form.
\end{proof}

\section{Minimal \(K\)-dilations}\label{sectionminimalkdilations}

Let \(\mathscr{A}\) be a unital subalgebra of a unital \(C^{*}\)-Algebra \(\mathscr{B}\). A completely positive unital map \(\varphi\colon\mathscr{B}\to L(H)\) is called an \(\mathscr{A}\)-morphism if
\(\varphi(1_{B})=1_{H}\) and \(\varphi(ax)=\varphi(a)\varphi(x)\) for  \(a\in\mathscr{A}\) and \(x\in\mathscr{B}\). Under the condition that \(\mathscr{B}\) is the norm-closed linear span
\begin{displaymath}
	\mathscr B = \overline{\operatorname{span}}^{\Vert\cdot\Vert}\lbrace\mathscr{A}\mathscr{A}^{*}\rbrace
\end{displaymath} 
Arveson proved in \cite[Lemmma 8.6]{Arv98} that every unitary operator that intertwines two \(\mathscr{A}\)-morphisms \(\varphi_{i}\colon\mathscr{B}\to L(H_{i})\) \((i=1,2)\) pointwise on \(\mathscr{A}\) extends to a unitary operator that intertwines the minimal Stinespring representations of \(\varphi_{1}\) and \(\varphi_{2}\).\\
Straightforward modifications of the arguments given in \cite{Arv98} show that Arveson's result remains true if \(\mathscr{B}\) is a von Neumann algebra  which is the \(w^{*}\)- closed linear span
\begin{displaymath}
\mathscr B = \overline{\operatorname{span}}^{w^{*}}\lbrace\mathscr{A}\mathscr{A}^{*}\rbrace
\end{displaymath}
and if the \(\mathscr{A}\)-morphisms \(\varphi_{i}\colon\mathscr{B}\to L(H_{i})\) \((i=1,2)\) are supposed to be \(w^{*}\)-continuous
 
\begin{thm}\label{thmarveson}
	Let \(\mathscr{B}\) be a von Neumann algebra and let \(\mathscr{A}\subset\mathscr{B}\) be a unital subalgebra such that
	\begin{displaymath}
		\mathscr{B}=\overline{\operatorname{span}}^{w^{*}}\lbrace\mathscr{A}\mathscr{A}^{*}\rbrace.
	\end{displaymath}
	 For \(i=1,2\), let \(\varphi_{i}\colon\mathscr{B}\to L(H_{i})\) be a \(w^{*}\)-continuous 
	\(\mathscr{A}\)-morphism and let \((\pi_{i},V_{i},H_{\pi_{i}})\) be the minimal Stinespring 
	representations for \(\varphi_{i}\). For every unitary operator \(U\colon H_{1}\to H_{2}\) with 
	\begin{displaymath}
	U\varphi_{1}(a)=\varphi_{2}(a)U\quad(a\in\mathscr{A}),
	\end{displaymath}
	there is a unique unitary operator \(W\colon H_{\pi_{1}}\to H_{\pi_{2}}\) with \(WV_1=V_2U\) and 
	\(W\pi_{1}(x)=\pi_{2}(x)W\) for all \(x\in\mathscr{B}\). 
	\end{thm}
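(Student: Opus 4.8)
The plan is to reduce Theorem \ref{thmarveson} to the classical uniqueness theorem for minimal Stinespring dilations. The one non-formal point will be to show that the intertwining hypothesis on $U$, which \emph{a priori} only concerns the subalgebra $\mathscr A$, automatically propagates to all of $\mathscr B$; once that is in hand the rest is the standard Stinespring argument.

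First I would note that $U$ intertwines $\varphi_1$ and $\varphi_2$ on $\mathscr A^{*}=\{a^{*};a\in\mathscr A\}$ as well. Taking adjoints in $U\varphi_1(a)=\varphi_2(a)U$ and using that each $\varphi_i$ preserves adjoints, hence $\varphi_i(a)^{*}=\varphi_i(a^{*})$, gives $\varphi_1(a^{*})U^{*}=U^{*}\varphi_2(a^{*})$, and conjugating by the unitary $U$ yields $U\varphi_1(a^{*})=\varphi_2(a^{*})U$. Next, for $a,b\in\mathscr A$ the $\mathscr A$-morphism property applied with $x=b^{*}\in\mathscr B$ gives $\varphi_i(ab^{*})=\varphi_i(a)\varphi_i(b^{*})$ for $i=1,2$, so that
\[
U\varphi_1(ab^{*})=U\varphi_1(a)\varphi_1(b^{*})=\varphi_2(a)U\varphi_1(b^{*})=\varphi_2(a)\varphi_2(b^{*})U=\varphi_2(ab^{*})U .
\]
Thus $U$ intertwines $\varphi_1$ and $\varphi_2$ on the linear span of $\mathscr A\mathscr A^{*}$. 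Since $\varphi_1,\varphi_2$ are $w^{*}$-continuous and $X\mapsto UXU^{*}$ is a $w^{*}$-continuous spatial isomorphism $L(H_1)\to L(H_2)$, the map $\mathscr B\to L(H_2)$, $x\mapsto U\varphi_1(x)U^{*}-\varphi_2(x)$, is $w^{*}$-continuous; its kernel is therefore $w^{*}$-closed, and since it contains $\operatorname{span}\{\mathscr A\mathscr A^{*}\}$, whose $w^{*}$-closure is $\mathscr B$, it equals $\mathscr B$. Hence $U\varphi_1(x)=\varphi_2(x)U$ for \emph{every} $x\in\mathscr B$. This is precisely where the $w^{*}$-topology hypotheses enter and is the ``straightforward modification'' of Arveson's argument alluded to above.

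With the full intertwining established I would run the usual Stinespring uniqueness argument. Recall $\varphi_i(x)=V_i^{*}\pi_i(x)V_i$ and, by minimality, $H_{\pi_i}=\overline{\operatorname{span}}\{\pi_i(x)V_ih;\,x\in\mathscr B,\,h\in H_i\}$. Define $W$ on the dense subspace $\operatorname{span}\{\pi_1(x)V_1h\}$ of $H_{\pi_1}$ by $W(\pi_1(x)V_1h)=\pi_2(x)V_2Uh$. For a finite sum,
\[
\Big\|\sum_j\pi_1(x_j)V_1h_j\Big\|^{2}=\sum_{j,k}\langle\varphi_1(x_k^{*}x_j)h_j,h_k\rangle,
\]
and, using $\varphi_2(x_k^{*}x_j)=U\varphi_1(x_k^{*}x_j)U^{*}$, the analogous expression for $\sum_j\pi_2(x_j)V_2Uh_j$ gives the same value; hence $W$ is well defined and isometric and extends to an isometry $H_{\pi_1}\to H_{\pi_2}$. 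Since $U$ is surjective, the image of $W$ contains $\{\pi_2(x)V_2h';\,x\in\mathscr B,\,h'\in H_2\}$ and is closed, so $W$ is unitary by minimality of $\pi_2$. Taking $x=1$ gives $WV_1=V_2U$, and $W\pi_1(x)(\pi_1(y)V_1h)=\pi_2(xy)V_2Uh=\pi_2(x)W(\pi_1(y)V_1h)$ together with density gives $W\pi_1(x)=\pi_2(x)W$ on $H_{\pi_1}$ for all $x\in\mathscr B$. Uniqueness is immediate: any $W'$ with $W'V_1=V_2U$ and $W'\pi_1(x)=\pi_2(x)W'$ satisfies $W'(\pi_1(x)V_1h)=\pi_2(x)V_2Uh$, so $W'=W$ on the dense subspace and hence everywhere.

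The main obstacle is the middle step: recognizing that the identities $\varphi_i(ab^{*})=\varphi_i(a)\varphi_i(b^{*})$ coming from the $\mathscr A$-morphism property, together with $\mathscr B=\overline{\operatorname{span}}^{w^{*}}\{\mathscr A\mathscr A^{*}\}$, already determine each $\varphi_i$ on $\mathscr B$ from its restriction to $\mathscr A$, so that an intertwining on $\mathscr A$ upgrades automatically to one on $\mathscr B$ — and verifying carefully that the $w^{*}$-continuity of $\varphi_1$ and $\varphi_2$ legitimizes the passage from $\operatorname{span}\{\mathscr A\mathscr A^{*}\}$ to its $w^{*}$-closure.
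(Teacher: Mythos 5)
Your proof is correct and follows exactly the route the paper intends: it is Arveson's argument for \cite[Lemma 8.6]{Arv98} (propagate the intertwining from $\mathscr A$ to $\mathscr A^*$ and to $\operatorname{span}\{\mathscr A\mathscr A^*\}$ via the $\mathscr A$-morphism identity, pass to the closure, then run the standard Stinespring uniqueness argument), with the norm-density and norm-continuity replaced by $w^*$-density and $w^*$-continuity. The paper itself omits the details, calling them ``straightforward modifications,'' and your write-up supplies precisely those details correctly.
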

Since this version of Arveson's result follows in exactly the same way as the original one (\cite[Lemmma 8.6]{Arv98}), 
we leave the details to the reader.
\\
As an application of Theorem \ref{thmarveson} we show that, under suitable conditions on the kernel 
\(K\colon\mathbb{B}_{d}\times\mathbb{B}_{d}\to\mathbb{C}\), minimal \(K\)-dilations are uniquely 
determined. Recall that a commuting tuple \(T\in L(H)^{d}\) on a Hilbert space \(H\) is called 
essentially normal if \(T_{i}T_{i}^{*}-T_{i}^{*}T_{i}\) is compact for \(i=1,\ldots,d\). 
If \(T\in L(H)^{d}\) is essentially normal, then by the Fuglede-Putnam theorem also all cross 
commutators \(T_{i}T_{j}^{*}-T_{j}^{*}T_{i}\) \((i,j=1,\ldots,d)\) are compact. For our multiplication 
tuple \(M_{z}\in L(H_K)^{d}\), essential normality is equivalent to the condition that (\cite[Corollary 4.4]{Guo04})
\begin{displaymath}
\lim_{n\to\infty}\left(\frac{a_{n}}{a_{n+1}}-\frac{a_{n-1}}{a_{n}}\right)=0.
\end{displaymath}

\begin{thm}\label{thmvonneumannmz}
Suppose that \(M_{z}\in L(H_K)^{d}\) is an essentially normal \(K\)-contra\-ction. 
Then the von Neumann algebra generated by \(M_{z_{1}},\ldots,M_{z_{d}}\) is given by
\begin{displaymath}
W^{*}(M_{z})=\overline{\operatorname{span}}^{w^{*}}\lbrace M_{z}^{\alpha}M_{z}^{*\beta};\thickspace\alpha,\beta\in\mathbb{N}^{d}\rbrace.
\end{displaymath}
\end{thm}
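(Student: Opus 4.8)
The inclusion $\overline{\operatorname{span}}^{w^*}\{M_z^\alpha M_z^{*\beta}\}\subseteq W^*(M_z)$ is clear, since $W^*(M_z)$ is a $w^*$-closed linear subspace of $L(H_K)$ containing all the products $M_z^\alpha M_z^{*\beta}$. For the converse my plan is to show that $\mathscr S:=\overline{\operatorname{span}}^{w^*}\{M_z^\alpha M_z^{*\beta};\ \alpha,\beta\in\mathbb N^d\}$, which is a unital self-adjoint $w^*$-closed subspace of $L(H_K)$, is closed under multiplication; it is then a von Neumann algebra containing each $M_{z_i}=M_z^{e_i}M_z^{*0}$, hence contains $W^*(M_z)$, and equality follows. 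Writing $\mathscr S_0=\operatorname{span}\{M_z^\alpha M_z^{*\beta}\}$, one has $M_z^\alpha\mathscr S_0 M_z^{*\beta}\subseteq\mathscr S_0$, so by $w^*$-continuity of left and right multiplication by the monomials $M_z^\alpha$, $M_z^{*\beta}$ one gets $M_z^\alpha\mathscr S M_z^{*\beta}\subseteq\mathscr S$. Combined with the separate $w^*$-continuity of multiplication, this reduces $\mathscr S_0\mathscr S_0\subseteq\mathscr S$, and then $\mathscr S\mathscr S\subseteq\mathscr S$, to the single claim that $M_z^{*\beta}M_z^\gamma\in\mathscr S$ for all $\beta,\gamma\in\mathbb N^d$.

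I would prove this by first establishing $\mathscr K(H_K)\subseteq\mathscr S$ and then normally ordering $M_z^{*\beta}M_z^\gamma$ modulo compacts. For the inclusion of the compacts: $\mathscr S$ is norm closed (being $w^*$-closed), so it suffices to exhibit the rank-one operators $f\mapsto\langle f,z^\beta\rangle z^\alpha$ in $\mathscr S$, as their linear span is norm dense in $\mathscr K(H_K)$. Let $P\in L(H_K)$ be the orthogonal projection onto the one-dimensional space of constant functions; since $a_0=1$ the constant $1$ is a unit vector, and one checks that $M_z^\alpha P M_z^{*\beta}$ is exactly the rank-one operator $f\mapsto\langle f,z^\beta\rangle z^\alpha$. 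As $M_z^\alpha\mathscr S M_z^{*\beta}\subseteq\mathscr S$, everything reduces to showing $P\in\mathscr S$, and here the $K$-contraction hypothesis enters. With $\sigma_{M_z}(X)=\sum_{i=1}^d M_{z_i}XM_{z_i}^*$ one has $\sigma_{M_z}^n(1_{H_K})=\sum_{|\alpha|=n}\gamma_\alpha M_z^\alpha M_z^{*\alpha}=:\Pi_n\in\mathscr S_0$, and from the description of $M_zM_z^*$ on the homogeneous decomposition $H_K=\bigoplus_{n\geq0}H_n$ (Lemma 4.3 in \cite{Guo04}) an easy induction shows that $\Pi_n$ acts on $H_j$ as multiplication by $a_{j-n}/a_j$ if $j\geq n$ and by $0$ if $j<n$. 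Since $k\cdot(1/k)=1$ gives $\sum_{n=0}^j c_n a_{j-n}=\delta_{j,0}$, the partial sums $S_N=\sum_{n=0}^N c_n\Pi_n\in\mathscr S_0$ satisfy $S_N|_{H_j}=\delta_{j,0}\,\mathrm{id}$ for $j\leq N$, so $S_N\to P$ pointwise on the dense subspace of polynomials. By the definition of a $K$-contraction the series $\sum_n c_n\sigma_{M_z}^n(1_{H_K})=\lim_N S_N$ converges in the strong operator topology, hence $(S_N)$ is uniformly bounded by the Banach--Steinhaus theorem, so $S_N\to P$ strongly; being uniformly bounded, this convergence holds in the $w^*$-topology as well. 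Therefore $P\in\overline{\mathscr S_0}^{w^*}=\mathscr S$, whence $\mathscr K(H_K)\subseteq\mathscr S$.

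With $\mathscr K(H_K)\subseteq\mathscr S$ in hand the remaining step is routine: repeatedly using $M_{z_i}^*M_{z_j}=M_{z_j}M_{z_i}^*+[M_{z_i}^*,M_{z_j}]$ one writes $M_z^{*\beta}M_z^\gamma=M_z^\gamma M_z^{*\beta}+R$, where $R$ is a finite sum of products each of which contains a commutator $[M_{z_i}^*,M_{z_j}]$ as a factor. Since $M_z$ is essentially normal, all these commutators are compact (by the Fuglede--Putnam theorem, as recalled above), so $R\in\mathscr K(H_K)\subseteq\mathscr S$ and thus $M_z^{*\beta}M_z^\gamma\in\mathscr S_0+\mathscr S=\mathscr S$. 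This closes the reduction and gives $\mathscr S=W^*(M_z)$. The one genuinely nontrivial ingredient is $P\in\mathscr S$, and I expect the control of the partial sums $S_N$ to be the crucial point: it is precisely here that the $K$-contraction hypothesis is indispensable, since it furnishes for free the strong — hence uniformly bounded, hence $w^*$- — convergence of $\sum_n c_n\sigma_{M_z}^n(1_{H_K})$ to $P$, whereas essential normality is needed only to make the corrections in the normal-ordering step compact.
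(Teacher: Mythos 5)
Your proposal is correct and follows essentially the same route as the paper: use the $K$-contraction hypothesis to get the projection onto the constants as the SOT (hence $w^*$) limit of $\sum_n c_n\sigma_{M_z}^n(1_{H_K})$, conjugate by monomials to capture all compacts, and then invoke essential normality to normally order $M_z^{*\beta}M_z^{\gamma}$ modulo compacts so that the $w^*$-closed span is a von Neumann algebra. You merely spell out in more detail two steps the paper leaves implicit, namely the identification of the SOT limit with $P_{\mathbb C}$ and the reduction of multiplicativity to the commutator computation.
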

\begin{proof}
Define \(\mathscr L=\overline{\operatorname{span}}^{w^{*}}\lbrace M_z^{\alpha}M_z^{*\beta}; \; \alpha,\beta\in\mathbb{N}^{d}\rbrace\) . Obviously \(\mathscr{L}\subset W^{*}(M_{z})\). Since \(M_{z}\) is supposed to be a \(K\)-contraction,
	\begin{displaymath}
		P_{\mathbb{C}}=\tau_{\operatorname{SOT}}-\sum_{n=0}^{\infty}c_{n}\sigma_{M_{z}}^{n}(1_{H_K})\in\mathscr{L}.
	\end{displaymath}
	For \(\alpha,\beta\in\mathbb{N}^{d}\) and \(w\in\mathbb{B}_{d}\), we obtain
	\begin{displaymath}
		M_{z}^{\alpha}P_{\mathbb{C}}M_{z}^{*\beta}(K(\cdot,w))=\overline{w}^{\beta}z^{\alpha}=z^{\alpha}\otimes z^{\beta}(K(\cdot,w)).
	\end{displaymath}
	Since the multiplication on \(L(H_K)\) is separately \(w^{*}\)-continuous, it follows that $\mathscr L$ contains all
	compact operators
	\begin{displaymath}
		K(H_K)=\overline{\operatorname{span}}^{\Vert\cdot\Vert}\lbrace z^{\alpha}\otimes z^{\beta};\thickspace\alpha,\beta\in\mathbb{N}^{d}\rbrace\subset\mathscr{L}.
	\end{displaymath}
	But then the hypothesis that \(M_{z}\) is essentially normal implies that \(\mathscr{L}\subset L(H_K)\) is a subalgebra. Since the involution on \(L(H_K)\) is \(w^{*}\)-continuous, the algebra \(\mathscr{L}\subset L(H_K)\) is a von Neumann algebra and hence \(\mathscr{L}=W^{*}(M_{z})\).
\end{proof}

The tuple \(M_{z}\in L(H_K)^{d}\) is known to be a \(K\)-contraction if there is a natural number 
\(p\in\mathbb{N}\) such that \(c_{n}\geq 0\) for all \(n\geq p\) or \(c_{n}\leq 0\) for all 
\(n\geq p\) (\cite[Lemma 2.2]{Chen} or \cite[Proposition 2.10]{Sch18}). The latter condition holds, 
for instance, if \(H_K\) is a complete Nevanlinna-Pick space such as the Drury-Arveson or Dirichlet space
on the unit ball or if \(K\) is a kernel of the form
\begin{align*}
	K_{\nu}\colon\mathbb{B}_{d}\times\mathbb{B}_{d}\to\mathbb{C},K_{\nu}(z,w)=\frac{1}{(1-\langle z,w\rangle)^{\nu}}
\end{align*} 
with a positive real number \(\nu>0\).
\\\\
Let \(T\in L(H)^{d}\) be a commuting tuple and let \(j\colon H\to H_{K}(\mathscr{E})\) be a \(K\)-dilation of \(T\). We denote by \(\mathscr{B}=W^{*}(M_{z})\subset L(H_K)\) the von Neumann algebra generated by \(M_{z}\) and set \(\mathscr{A}=\lbrace p(M_{z});\thickspace p\in\mathbb{C}[z]\rbrace\). The unital \(C^{*}\)-homomorphism
\begin{displaymath}
	\pi\colon\mathscr{B}\to L(H_{K}(\mathscr{E})),\thickspace X\mapsto X\otimes 1_{\mathscr{E}}
\end{displaymath}
together with the isometry \(j\colon H\to H_{K}(\mathscr{E})\) is a Stinespring representation for the completely positive map
\begin{displaymath}
\varphi\colon\mathscr{B}\to L(H_{K}(\mathscr{E})),\thickspace \varphi(X)=j^{*}(X\otimes 1_{\mathscr{E}})j.
\end{displaymath}
The map \(\varphi\) is an \(\mathscr{A}\)-morphism, since
\begin{align*}
	\varphi(p(M_{z})X)&=j^{*}(p(M_{z}\otimes 1_{\mathscr{E}})X\otimes 1_{\mathscr{E}})j = 
	j^{*}p(M_{z}\otimes 1_{\mathscr{E}})(jj^{*})(X\otimes 1_{\mathscr{E}})j\\&=\varphi(p(M_{z}))\varphi(X)
\end{align*}
for all \(p\in\mathbb{C}[z]\) and \(X \in\mathscr{B}\). Standard duality theory for Banach space operators 
shows that \(\pi\) is \(w^{*}\)-continuous. Indeed, as an application of Krein-Smulian's theorem 
(Theorem IV. 6.4 in \cite{Sae66}) one only has to check that 
\(\tau_{w^{*}}-\lim_{\alpha}(X_{\alpha}\otimes 1_{\mathscr{E}}) = X\otimes 1_{\mathscr{E}}\) 
for each norm-bounded net \((X_{\alpha})\) in \(\mathscr{B}\) with \(\tau_{w^{*}}-\lim_{\alpha}X_{\alpha}=X\). 
To complete the argument it suffices to recall that on norm-bounded sets the \(w^{*}\)-topology 
and the weak operator topology coincide. Thus we have shown that \(\varphi\) is a \(w^{*}\)-continuous 
\(\mathscr{A}\)-morphism with Stinespring representation \(\pi\). By definition the \(K\)-dilation 
\(j\colon H\to H_{K}(\mathscr{E})\) is minimal if and only if
\begin{displaymath}
	\bigvee_{X\in W^{*}(M_{z})}\pi(X)(jH)=H_{K}(\mathscr{E}),
\end{displaymath} 
hence if and only if \(\pi\) as a Stinespring representation of \(\varphi\) is minimal.

\begin{cor}\label{corkdilation}
	Suppose that \(M_{z}\in L(H_K)^{d}\) is an essentially normal \(K\)-contraction. If \(j_{i}\colon H\to H_{K}(\mathscr{E}_{i})\) \((i=1,2)\) are two minimal \(K\)-dilations of a commuting tuple \(T\in L(H)^{d}\), then there is a unitary operator \(U\in L(\mathscr{E}_{1},\mathscr{E}_{2})\) such that \(j_{2}=(1_{H_{K}}\otimes U)j_{1}\)
\end{cor}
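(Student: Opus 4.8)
The plan is to derive the corollary from Arveson's uniqueness theorem for minimal Stinespring representations (Theorem \ref{thmarveson}), applied to the completely positive maps attached to the two $K$-dilations. Write $\mathscr{B} = W^*(M_z)$ and $\mathscr{A} = \lbrace p(M_z)\colon p \in \mathbb{C}[z]\rbrace$, and for $i = 1,2$ consider the $w^*$-continuous $\mathscr{A}$-morphism
\[
\varphi_i\colon \mathscr{B} \to L(H), \quad \varphi_i(X) = j_i^*(X \otimes 1_{\mathscr{E}_i})j_i,
\]
together with its Stinespring representation $\pi_i\colon \mathscr{B} \to L(H_K(\mathscr{E}_i))$, $\pi_i(X) = X \otimes 1_{\mathscr{E}_i}$, as in the discussion following Theorem \ref{thmvonneumannmz}. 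Since $j_i$ is a \emph{minimal} $K$-dilation, $\pi_i$ is a minimal Stinespring representation of $\varphi_i$.

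The first step is to observe that $\varphi_1 = \varphi_2$. Using that $j_i$ intertwines $T^*$ with $M_z^*$ componentwise and is isometric, one computes directly that $\varphi_i(M_{z_k}) = T_k$ for $k = 1,\dots,d$; since $\varphi_i$ is a unital $\mathscr{A}$-morphism, this forces $\varphi_i(p(M_z)) = p(T)$ for every $p \in \mathbb{C}[z]$, so $\varphi_1$ and $\varphi_2$ agree on $\mathscr{A}$. As each $\varphi_i$ is positive, hence $*$-preserving, the $\mathscr{A}$-morphism property gives $\varphi_i(ab^*) = \varphi_i(a)\varphi_i(b)^*$ for $a,b \in \mathscr{A}$, so $\varphi_1$ and $\varphi_2$ agree on $\operatorname{span}\lbrace\mathscr{A}\mathscr{A}^*\rbrace$; since $M_z$ is an essentially normal $K$-contraction, Theorem \ref{thmvonneumannmz} yields $\mathscr{B} = \overline{\operatorname{span}}^{w^*}\lbrace\mathscr{A}\mathscr{A}^*\rbrace$, and $w^*$-continuity of the $\varphi_i$ then gives $\varphi_1 = \varphi_2$ on all of $\mathscr{B}$. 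Now I apply Theorem \ref{thmarveson} to this common map with $H_1 = H_2 = H$ and $U = 1_H$ (admissible since $1_H\varphi_1(a) = \varphi_2(a)1_H$ trivially) to obtain a unitary $W\colon H_K(\mathscr{E}_1) \to H_K(\mathscr{E}_2)$ with $Wj_1 = j_2$ and $W(X \otimes 1_{\mathscr{E}_1}) = (X \otimes 1_{\mathscr{E}_2})W$ for all $X \in \mathscr{B}$.

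It remains to identify $W$ as an ampliation. Here I would use that $\mathscr{B} = W^*(M_z)$ contains all compact operators on $H_K$ (as shown in the proof of Theorem \ref{thmvonneumannmz}), so that the intertwining relation holds for every rank-one operator $\xi \otimes \eta \in L(H_K)$. Fixing a unit vector $\eta \in H_K$, the relation for the projection $(\eta \otimes \eta) \otimes 1$ shows that $W$ restricts to a unitary $\eta \otimes \mathscr{E}_1 \to \eta \otimes \mathscr{E}_2$, i.e.\ induces a unitary $U \in L(\mathscr{E}_1,\mathscr{E}_2)$; for arbitrary $\zeta \in H_K$ and $e \in \mathscr{E}_1$, writing $\zeta \otimes e = ((\zeta \otimes \eta) \otimes 1_{\mathscr{E}_1})(\eta \otimes e)$ and intertwining with $X = \zeta \otimes \eta$ gives $W(\zeta \otimes e) = \zeta \otimes Ue$, i.e.\ $W = 1_{H_K} \otimes U$. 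Then $j_2 = Wj_1 = (1_{H_K} \otimes U)j_1$, which is the assertion. Thus the corollary is largely a matter of assembling results already in place; the one genuinely nontrivial ingredient, Arveson's uniqueness theorem, is available as Theorem \ref{thmarveson}, and the two points requiring care are the passage of the identity $\varphi_1 = \varphi_2$ from $\mathscr{A}$ to the full von Neumann algebra $\mathscr{B}$ (where essential normality of $M_z$ enters) and the final identification $W = 1_{H_K}\otimes U$, which rests on $K(H_K) \subset \mathscr{B}$.
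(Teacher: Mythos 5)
Your proof is correct, and its first half coincides with the paper's argument: both set up the $w^*$-continuous $\mathscr{A}$-morphisms $\varphi_i(X)=j_i^*(X\otimes 1_{\mathscr{E}_i})j_i$ with minimal Stinespring representations $\pi_i(X)=X\otimes 1_{\mathscr{E}_i}$, observe $\varphi_i(p(M_z))=p(T)$, and invoke Theorem \ref{thmarveson} with $U=1_H$ to get a unitary $W$ with $Wj_1=j_2$ intertwining the ampliations of $\mathscr{B}$. One remark: your intermediate step establishing $\varphi_1=\varphi_2$ on all of $\mathscr{B}$ (via $\varphi_i(ab^*)=\varphi_i(a)\varphi_i(b)^*$ and $w^*$-density of $\operatorname{span}\{\mathscr{A}\mathscr{A}^*\}$) is correct but unnecessary, since Theorem \ref{thmarveson} only requires the intertwining relation on $\mathscr{A}$; the density hypothesis is consumed inside that theorem, not by you. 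Where you genuinely diverge is in the identification $W=1_{H_K}\otimes U$. The paper uses only the intertwining with $M_{z_i}\otimes 1$ and then appeals to Barbian's characterization of multipliers to write $W=M_A$, $W^*=M_B$, deducing from $A(z)K(z,w)A(w)^*=K(z,w)1_{\mathscr{E}_2}$ that $A$ is a constant unitary. You instead exploit the full strength of the intertwining relation on $\mathscr{B}$: since $K(H_K)\subset W^*(M_z)$ (established in the proof of Theorem \ref{thmvonneumannmz} under the essential normality hypothesis), you may take $X$ to be a rank-one operator $\zeta\otimes\eta$, and the computation $W(\zeta\otimes e)=((\zeta\otimes\eta)\otimes 1_{\mathscr{E}_2})W(\eta\otimes e)=\zeta\otimes Ue$ identifies $W$ on elementary tensors. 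Your route is more elementary and self-contained (no external multiplier theorem, no reproducing-kernel computation), at the cost of using more of the algebra $\mathscr{B}$; the paper's route would survive in situations where one only knows the intertwining with the coordinate multipliers, whereas yours leans on the compacts lying in $W^*(M_z)$ --- which, however, is exactly what the standing hypotheses guarantee here.
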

\begin{proof}
	As before we denote by \(\mathscr{B}=W^{*}(M_{z})\subset L(H_K)\) the von Neumann algebra generated by \(M_{z_{1}},\ldots,M_{z_{d}}\in L(H_K)\) and define \(\mathscr{A}=\lbrace p(M_{z});\thickspace p\in\mathbb{C}[z]\rbrace\). The remarks preceding the corollary show that the maps
	\begin{displaymath}
		\varphi_{i}\colon\mathscr{B}\to L(H),\thickspace\varphi_{i}(X)=j_{i}^{*}(X\otimes 1_{\mathscr{E}_{i}})j_{i}\quad(i=1,2)
	\end{displaymath}
	are \(w^{*}\)-continuous \(\mathscr{A}\)-morphisms with minimal Stinespring representations
	\begin{displaymath}
		\pi_{i}\colon\mathscr{B}\to L(H_{K}(\mathscr{E}_{i})),\thickspace\pi_{i}(X)=X\otimes 1_{\mathscr{E}_{i}}\quad(i=1,2).
	\end{displaymath}
	Since
	\[
	\varphi_i(p(M_z)) = j^* p(M_z \otimes 1_{\mathscr E}) j = p(T)
	\]
	for all $p \in \mathbb C[z]$ and $i = 1,2$,
	Theorem \ref{thmarveson} implies that there is a unitary operator \(W\colon H_{K}(\mathscr{E}_{1})\to H_K(\mathscr{E}_{2})\) 
	with \(Wj_{1}=j_{2}\) and \(W(X\otimes 1_{\mathscr{E}_{1}})=(X\otimes 1_{\mathscr{E}_{2}})W\)
	for all \(X\in\mathscr{B}\). In particular, the unitary operator \(W\) satisfies the intertwining relations 
	\begin{displaymath}
		W(M_{z_{i}}\otimes 1_{\mathscr{E}_{1}})=(M_{z_{i}}\otimes 1_{\mathscr{E}_{2}})W\quad(i=1,\ldots,d)
	\end{displaymath}
	A standard characterization of multipliers on reproducing kernel Hilbert spaces 
	(\cite[Theorem 2.1]{Bar11}) shows that there exist operator-valued functions \(A\colon\mathbb{B}_{d}\to L(\mathscr{E}_{1},\mathscr{E}_{2})\) and \(B\colon\mathbb{B}_{d}\to L(\mathscr{E}_{2},\mathscr{E}_{1})\) such that \(Wf=Af\) and \(W^{*}g=Bg\) for \(f\in H_{K}(\mathscr{E}_{1})\) and \(g\in H_{K}(\mathscr{E}_{2})\) (see also \cite[Proposition 4.5]{Sch18}). It follows that \(A(z)B(z)=1_{\mathscr{E}_{2}}\) and \(B(z)A(z)=1_{\mathscr{E}_{1}}\) for \(z\in\mathbb{B}_{d}\). Since
	\begin{displaymath}
		K(z,w)x=(WW^{*}K(\cdot,w)x)(z)=A(z)K(z,w)A(w)^{*}x
	\end{displaymath}
	for \(z,w\in\mathbb{B}_{d}\) and \(x\in\mathscr{E}_{2}\), we find that \(A(z)A(w)^{*}=1_{\mathscr{E}_{2}}\) for \(z,w\in\mathbb{B}_{d}\). But then the constant value \(A(z)\equiv U\in L(\mathscr{E}_{1},\mathscr{E}_{2})\) is a unitary operator with \(W=1_{H_K}\otimes U\).
\end{proof}

We conclude this section by showing that the canonical \(K\)-dilation of a \(K\)-contraction  \(T\in L(H)^{d}\) defined in Theorem \ref{thminter} is minimal. To prepare this result we first identify the \(M_{z}\)-reducing subspaces of \(H_{K}(\mathscr{E})\).

\begin{lem}\label{lemmzreducing}
	Let \(M\subset H_{K}(\mathscr{E})\) be a closed linear subspace. If \(M\) is reducing for \(M_{z}\in L(H_{K}(\mathscr{E}))^{d}\), then \(P_{\mathscr{E}}M\subset M\) and
	\begin{displaymath}
		M=\bigvee_{\alpha\in \mathbb{N}^{d}}z^{\alpha}(M\cap\mathscr{E})=H_{K}(M\cap\mathscr{E}).
	\end{displaymath}
\end{lem}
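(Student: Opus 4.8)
The plan is to work with a fixed reducing subspace $M \subset H_K(\mathscr E)$ and to extract from it the ``constant part'' $\mathscr F = M \cap \mathscr E$, where as usual $\mathscr E \subset H_K(\mathscr E)$ is identified with the subspace of constant functions. The first step is to show that the orthogonal projection $P_{\mathscr E}$ maps $M$ into $M$. Recall from the remark following Lemma \ref{lemsdelta} that $P_{\mathscr E} = 1 - \delta(M_z M_z^*) = 1 - M_z(M_z^*M_z)^{-1}M_z^*$ is the orthogonal projection onto the constants; since $M$ reduces $M_z$ it reduces $M_z^*M_z$, hence also its inverse on $\operatorname{Im}M_z^*$, and therefore it reduces $P_{\mathscr E}$. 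In particular $P_{\mathscr E}M \subset M$, so $P_{\mathscr E}M \subset M \cap \mathscr E = \mathscr F$.

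Next I would show $M \subset H_K(\mathscr F)$. For $f = \sum_{\alpha} f_\alpha z^\alpha \in M$ and each multi-index $\alpha$, one recovers $f_\alpha$ (up to a positive scalar coming from the diagonal action of the $M_{z_i}^*$) by applying a suitable monomial $M_z^{*\beta}$ in the column/row operators and then the projection $P_{\mathscr E}$. More precisely, an elementary computation (as in the proof of Lemma \ref{lemsdelta}, using that $M_z M_z^*$ acts diagonally as multiplication by $a_{n-1}/a_n$ on $H_n(\mathscr E)$) shows that for each $\alpha$ there is a polynomial $q_\alpha$ in the operators $M_{z_1},\dots,M_{z_d}$ and their adjoints with $P_{\mathscr E}\, q_\alpha(M_z,M_z^*) f = f_\alpha$. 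Since $M$ reduces $M_z$ it is invariant under $q_\alpha(M_z,M_z^*)$, and by the previous paragraph it is invariant under $P_{\mathscr E}$; hence every coefficient $f_\alpha$ lies in $\mathscr F$. Thus $M \subset H_K(\mathscr F)$.

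It remains to prove the reverse inclusion $H_K(\mathscr F) \subset M$, equivalently $z^\alpha \mathscr F \subset M$ for all $\alpha$; this gives $\bigvee_\alpha z^\alpha(M\cap\mathscr E) = M = H_K(M \cap \mathscr E)$. Given $v \in \mathscr F = M \cap \mathscr E \subset M$, the constant function $v$ lies in $M$, and since $M$ is $M_z$-invariant we get $M_z^\alpha v = z^\alpha v \in M$ for every $\alpha \in \mathbb N^d$; the closed linear span of these is exactly $H_K(\mathscr F)$, which is therefore contained in $M$. Combined with $M \subset H_K(\mathscr F)$ this yields $M = H_K(M \cap \mathscr E)$, and the displayed chain of equalities follows.

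The only mildly delicate point is the recovery of the Taylor coefficients $f_\alpha$ from $f$ by means of the operators $M_{z_i}, M_{z_i}^*$ together with $P_{\mathscr E}$ — i.e.\ making precise that the monomials in $M_z$ and $M_z^*$ together with the constant projection separate homogeneous components and then, within a fixed degree, separate the individual multi-indices $\alpha$ with $|\alpha| = n$. This is where the diagonal description of $M_z M_z^*$ from the proof of Lemma \ref{lemsdelta}, the relation $\delta M_{z_i} = M_{z_i}\Delta$, and the invertibility of $\delta,\Delta$ do the work; once that bookkeeping is set up the rest is immediate from the reducing hypothesis on $M$.
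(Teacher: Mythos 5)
Your proposal is correct, and its core is the same as the paper's: recover each Taylor coefficient \(f_\alpha\) of \(f\in M\) as a nonzero multiple of \(P_{\mathscr E}M_z^{*\alpha}f\), conclude that \(f_\alpha\in M\cap\mathscr E\), and get the reverse inclusion from the \(M_z\)-invariance of \(M\). The one place where you genuinely diverge is the step \(P_{\mathscr E}M\subset M\): the paper gets this from von Neumann algebra theory, namely that the projection onto \(\bigcap_i\operatorname{Ker}M_{z_i}^*\) belongs to \(W^*(M_z)\) and that a subspace reducing a tuple reduces the generated von Neumann algebra. You instead use the explicit formula \(P_{\mathscr E}=1-M_z(M_z^*M_z)^{-1}M_z^*\) from the remark after Lemma \ref{lemsdelta} and check directly that this operator leaves \(M\) invariant. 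That route is more elementary and self-contained; the only point you should spell out is why \((M_z^*M_z)^{-1}\) maps \(M^d\cap\operatorname{Im}M_z^*\) into \(M^d\) (it is \(M^d\), not \(M\), that reduces the matrix operator \(M_z^*M_z\) on \(H_K(\mathscr E)^d\)). This is true: if \(g\in M^d\cap\operatorname{Im}M_z^*\) and \(h=(M_z^*M_z)^{-1}g\) is split as \(h_1+h_2\) with \(h_1\in M^d\), \(h_2\in (M^d)^\perp\), then \(M_z^*M_zh_2=0\), so \(h_2\in\operatorname{Ker}M_z\perp\operatorname{Im}M_z^*\ni h\), forcing \(\Vert h_2\Vert^2=\langle h-h_1,h_2\rangle=0\). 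With that detail supplied, your first step is complete, and the rest of the argument — monomials \(M_z^{*\beta}\) acting diagonally so that \(P_{\mathscr E}M_z^{*\beta}f\) is a positive multiple of \(f_\beta\), followed by \(\bigvee_\alpha z^\alpha(M\cap\mathscr E)\subset M\) by invariance — matches the paper's proof.
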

\begin{proof}
The hypothesis implies that \(M\) is reducing for the von Neumann algebra \(W^{*}(M_{z})\subset L(H_{K}(\mathscr{E}))\) generated by \(M_{z_{1}},\ldots M_{z_{d}}\in L(H_{K}(\mathscr{E}))\). Standard results on von Neumann algebras (Corollary 17.6 and Proposition 24.1 in \cite{Zhu93})
show that
\begin{displaymath}
	P_{\mathscr{E}}=P_{\bigcap\operatorname{Ker}M_{z_{i}}^{*}}\in W^{*}(M_{z}).
\end{displaymath}
Hence \(P_{\mathscr{E}}M\subset M\). Let \(f=\sum_{\alpha\in\mathbb{N}^{d}}f_{\alpha}z^{\alpha}\in H_{K}(\mathscr{E})\) be arbitrary. An elementary calculation yields that
\begin{displaymath}
	P_{\mathscr{E}}(M_{z}^{*\beta}f)\in(\mathbb{C}\setminus\lbrace 0\rbrace)f_{\beta}\quad(\beta\in\mathbb{N}^{d}).
\end{displaymath}
Hence, if \(f\in M\), then \(f_{\beta}\in M\cap\mathscr{E}\) for all \(\beta\in\mathbb{N}^{d}\) and the observation that
\begin{displaymath}
	f=\sum_{\alpha\in\mathbb{N}^{d}}f_{\alpha}z^{\alpha}\in\bigvee_{\alpha\in \mathbb{N}^{d}}z^{\alpha}(M\cap\mathscr{E})=H_{K}(M\cap\mathscr{E})
\end{displaymath}
completes the proof.
\end{proof}
 
\begin{cor}\label{corminimal}
Let \(T\in L(H)^{d}\) be a pure \(K\)-contraction. Then the \(K\)-dilation
\begin{displaymath}
	j\colon H\to H_{K}(\mathscr{D}),\thickspace j(x)=\sum_{\alpha\in\mathbb{N}^{d}}a_{\vert\alpha\vert}\gamma_{\alpha}(CT^{*\alpha}x)z^{\alpha}
\end{displaymath}
defined in Theorem \ref{thminter} is minimal. 
\end{cor}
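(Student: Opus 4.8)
The plan is to combine the description of the $M_z$-reducing subspaces of $H_{K}(\mathscr{D})$ furnished by Lemma \ref{lemmzreducing} with the elementary observation that the constant Fourier coefficient of $j(x)$ is precisely $Cx$. Concretely, I would fix an arbitrary reducing subspace $R\in\operatorname{Lat}(M_z,H_K(\mathscr{D}))$ that is reducing (not merely invariant) for $M_z\in L(H_K(\mathscr{D}))^{d}$ and satisfies $\operatorname{Im}j\subset R$, and show $R=H_{K}(\mathscr{D})$. Applying Lemma \ref{lemmzreducing} with $\mathscr{E}=\mathscr{D}$ gives $P_{\mathscr{D}}R\subset R$ and $R=H_{K}(R\cap\mathscr{D})$, where $\mathscr{D}\subset H_{K}(\mathscr{D})$ is identified with the subspace of constant functions.

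Next I would compute $P_{\mathscr{D}}j(x)$ for $x\in H$. In the series $j(x)=\sum_{\alpha\in\mathbb{N}^{d}}a_{\vert\alpha\vert}\gamma_{\alpha}(CT^{*\alpha}x)z^{\alpha}$ from Theorem \ref{thminter} the only contribution to the constant term is the index $\alpha=0$, and since $a_{0}=\gamma_{0}=1$ this term is $Cx$; hence $P_{\mathscr{D}}j(x)=Cx$. Because $\operatorname{Im}j\subset R$ and $P_{\mathscr{D}}R\subset R$, this yields $Cx\in R$ for every $x\in H$, and since $R$ is closed we obtain $\mathscr{D}=\overline{\operatorname{Im}C}\subset R$. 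Together with $R=H_{K}(R\cap\mathscr{D})$ and the chain $\mathscr{D}\subset R\cap\mathscr{D}\subset\mathscr{D}$ this forces $R\cap\mathscr{D}=\mathscr{D}$, and therefore $R=H_{K}(\mathscr{D})$. Thus the only reducing subspace of $M_z$ containing $\operatorname{Im}j$ is $H_K(\mathscr D)$, i.e. $j$ is minimal.

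I do not expect a genuine obstacle here once Lemma \ref{lemmzreducing} is in hand; the argument is short. The only points requiring a little care are the bookkeeping showing that the constant term of $j(x)$ really is $Cx$ (which uses the normalization $a_{0}=1$) and the routine fact that the subspaces $z^{\alpha}\mathscr{D}$, $\alpha\in\mathbb{N}^{d}$, span $H_{K}(\mathscr{D})$, so that $\mathscr{D}\subset R$ together with $M_z$-invariance of $R$ already forces $R=H_K(\mathscr D)$ — both of which are immediate.
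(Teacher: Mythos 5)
Your proposal is correct and follows essentially the same route as the paper: both invoke Lemma \ref{lemmzreducing} to write a reducing subspace containing $\operatorname{Im}j$ as $H_{K}(M\cap\mathscr{D})$, then observe that $P_{\mathscr{D}}j(x)=Cx$ so that $\mathscr{D}=\overline{CH}\subset M\cap\mathscr{D}$, forcing $M=H_{K}(\mathscr{D})$. No gaps; the bookkeeping points you flag are exactly the ones the paper relies on implicitly.
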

\begin{proof}
Let \(\operatorname{Im}j\subset M\) be a reducing subspace for \(M_{z}\in L(H_{K}(\mathscr{D}))^{d}\). We know from Lemma \ref{lemmzreducing} that
\begin{displaymath}
 	M=\bigvee_{\alpha\in \mathbb{N}^{d}}z^{\alpha}(M\cap\mathscr{D})
\end{displaymath}
 and that
\begin{displaymath}
	CH=P_{\mathscr{D}}(\operatorname{Im}j)\subset P_{\mathscr{D}}(M)\subset M\cap\mathscr{D}.
\end{displaymath}
It follows that \(\mathscr{D}=\overline{CH}=M\cap\mathscr{D}\) and that \(M=\bigvee_{\alpha\in \mathbb{N}^{d}}z^{\alpha}\mathscr{D}=H_{K}(\mathscr{D})\).
\end{proof}


\end{document}